\documentclass[a4paper]{amsart}

\usepackage{amsmath,amssymb,amsfonts}
\usepackage{mathrsfs,latexsym,amsthm,enumerate}
\usepackage{verbatim}
\usepackage{tikz}
\usetikzlibrary{matrix,arrows}

\usepackage[all]{xy}

\newcommand{\superscript}[1]{\ensuremath{^{\textrm{#1}}}}

\setlength{\marginparwidth}{0.9in}
\let\oldmarginpar\marginpar
\renewcommand\marginpar[1]{\-\oldmarginpar[\raggedleft\footnotesize #1]{\raggedright\footnotesize #1}}

\newcounter{mnotes}
\setcounter{mnotes}{1}

\newcommand{\mnote}[1]{
\superscript{\bf\arabic{mnotes}}
\marginpar{\small \superscript{\arabic{mnotes}}#1}
\addtocounter{mnotes}{1}}

\newcommand{\hidenotes}{\renewcommand{\mnote}[1]{}}




\renewcommand{\2}{\mathbf{2}}
\newcommand{\dom}{{\mathrm{dom}}}

\newcommand{\im}{{\mathrm{im}}}
\newcommand{\D}{\mathcal{D}}

\newcommand{\Sp}{\mathbf{S}}

\newcommand{\Hom}{\mathrm{Hom}}
\newcommand{\ocap}{\wedge}
\newcommand{\ucup}{\vee}

\newcommand{\Up}{\tau^\uparrow} 
\newcommand{\Down}{\tau^\downarrow} 
\newcommand{\Dist}{\mathsf{DL}} 
\newcommand{\DSL}{\mathsf{SDL}} 
\newcommand{\PS}{\mathsf{PS}} 
\newcommand{\LPS}{\mathsf{LPS}}
\renewcommand{\Sp}{\mathcal{S}}
\newcommand{\La}{\mathcal{L}}
\newcommand{\Lat}{\mathsf{Lat}} 
\newcommand{\Skew}{\mathsf{Skew}}
\newcommand{\Skewno}{\mathsf{SkewLR}}

\newcommand{\Etaletwo}{\mathsf{EtalePairs}}

\newcommand{\Sh}{\mathsf{Sh}}
\newcommand{\onto}{\twoheadrightarrow}
\newcommand{\germ}{\mathrm{germ}}
\renewcommand{\L}{\mathcal{L}}
\newcommand{\R}{\mathcal{R}}
\renewcommand{\hat}{\widehat}
\newcommand{\ev}{\mathsf{ev}}


\newcommand{\lincol}{black}
\newcommand{\linth}{thick}
\newcommand{\po}[2][\pocol]{\filldraw[#1](#2) circle (2 pt);}
\newcommand{\li}[1]{\draw[\linth,\lincol] #1;}

\theoremstyle{plain}
\newtheorem{theorem}{Theorem}[section]
\newtheorem{lemma}[theorem]{Lemma}
\newtheorem{proposition}[theorem]{Proposition}
\newtheorem{corollary}[theorem]{Corollary}

\theoremstyle{definition}

\newtheorem{remark}[theorem]{Remark}

\hidenotes

\title{A non-commutative Priestley duality}

\thanks{2010 Mathematics Subject Classification: 06D50, 06F05, 54B40. Keywords: skew lattice, Stone duality, Priestley duality, sheaves over a Priestley space, sheaves over a spectral space, non-commutative algebra}

\author[A. Bauer]{Andrej Bauer}
\address{A.B.: University of Ljubljana,
Faculty of Mathematics and Physics, \newline
Jadranska 19,
SI-1001, Ljubljana,
Slovenia.}
\email{andrej.bauer\symbol{64}andrej.com}

\author[K. Cvetko-Vah]{Karin Cvetko-Vah}
\address{K.C.-V.: University of Ljubljana,
Faculty of Mathematics and Physics, \newline
Jadranska 19,
SI-1001, Ljubljana,
Slovenia.}
\email{karin.cvetko\symbol{64}fmf.uni-lj.si}

\author[M. Gehrke]{Mai Gehrke}
\address{M.G. (corresponding author): LIAFA, CNRS and Universit\'e Paris Diderot -- Paris 7,\newline
Case 7014,
F-75205 Paris Cedex 13,
+33 1 5727 9404,
France.}
\email{mgehrke\symbol{64}liafa.univ-paris-diderot.fr}

\author[S. J. van Gool]{Samuel J. van Gool}
\address{S.V.G.: LIAFA, Universite Paris Diderot -- Paris 7 and IMAPP, Radboud University Nijmegen, 
P.O. Box 9010
6500 GL Nijmegen\\
The Netherlands.}
\email{samvangool\symbol{64}me.com}

\author[G. Kudryavtseva]{Ganna Kudryavtseva}
\address{G.K.: University of Ljubljana,
Faculty of Computer and Information Science, \newline
Tr\v{z}a\v{s}ka cesta 25,
SI-1001, Ljubljana,
Slovenia.}
\email{ganna.kudryavtseva\symbol{64}fri.uni-lj.si}

\begin{document}

\begin{abstract} 
We prove that the category of left-handed strongly distributive skew lattices with zero and proper homomorphisms is dually equivalent to a category of sheaves over local Priestley spaces. Our result thus provides a non-commutative version of classical Priestley duality for distributive lattices and generalizes the recent development of Stone duality for skew Boolean algebras. 

From the point of view of skew lattices, Leech showed early on that any strongly distributive skew lattice can be embedded in the skew lattice of partial functions on some set with the operations being given by restriction and so-called override. Our duality shows that there is a canonical choice for this embedding. 

Conversely, from the point of view of sheaves over Boolean spaces, our results show that skew lattices correspond to Priestley orders on these spaces and that skew lattice structures are naturally appropriate in any setting involving sheaves over Priestley spaces.
\end{abstract}
\maketitle

\section{Introduction}
Skew lattices \cite{L1,L2} are a non-commutative version of lattices: algebraically, a skew lattice is a structure $(S,\vee,\wedge)$, where $\vee$ and $\wedge$ are binary operations which satisfy the associative and idempotent laws, and certain absorption laws (see \ref{subsec:skewlatdef} below).

Concrete classes of examples of skew lattices occur in many situations. The skew lattices in such classes of examples often have a {\it zero} element, and also satisfy certain additional axioms, which are called {\it strong distributivity} and {\it left-handedness} (see \ref{subsec:distskew} and \ref{subsec:lefthand} below). A (proto)typical class of such examples  is that of {\it skew lattices of partial functions}, which we will describe now. If $X$ and $Y$ are sets, then the collection $S$ of partial functions from $X$ to $Y$ carries a natural skew lattice structure, as follows. If $f, g \in S$ are partial functions, we define $f \wedge g$ to be the {\it restriction} of $f$ by $g$, that is, the function with domain $\dom(f) \cap \dom(g)$, where its value is defined to be equal to the value of $f$. We define $f \vee g$ to be the {\it override} of $f$ with $g$, that is, the function with domain $\dom(f) \cup \dom(g)$, where its value is defined to be equal to the value of $g$ whenever $g$ is defined, and to the value of $f$ otherwise. The {\it zero element} is the unique function with empty domain.

One consequence of the results in this paper is that {\it every left-handed strongly distributive skew lattice with zero can be embedded into a skew lattice of partial functions}. This fact was first proved in \cite[3.7]{L6} as a consequence of the description of the subdirectly irreducible algebras in the variety of strongly distributive skew lattices. Our proof will not depend on this result, and it will moreover provide a canonical choice of an enveloping skew lattice of partial functions. A related result in computer science is described in \cite{BJSV2010}, where the authors give a complete axiomatisation of the structure of partial functions with the operations override and `update', from which the `restriction' given above can also be defined.

In order to state our results precisely, background is needed in skew lattices, Priestley duality, and sheaf theory (see Sections~\ref{s:skew_prel}~and~\ref{s:com}). In particular, we make essential use of the well-known correspondence between \'etal\'e spaces and sheaves. This correspondence allows one to view a sheaf over a space $X$ as a bundle $p:E\to X$ of sets $\{p^{-1}(x)\}_{x\in X}$ such that $p$ is a local homeomorphism (see Subsection~\ref{s:etale_prel} below). The \emph{local sections} of the sheaf are then the partial maps from $X$ to $E$ for which the image of each $x$ in the domain belongs to the stalk $p^{-1}(x)$. This is how sheaves give rise to partial maps. The set of all local sections with clopen domains forms a skew Boolean algebra and if $X$ is also equipped with a partial order, then the local sections with domains that are clopen downsets form a strongly distributive skew lattice. Our duality shows that this accounts for all strongly distributive skew lattices: we will prove that {\it every left-handed strongly distributive skew lattice with zero is isomorphic to a skew lattice of all local sections over clopen downsets of some bundle}. Moreover, it will be a consequence of our duality result that there is a canonical choice for the bundle and base space which represent a given skew lattice. Among all representing bundles, there is an (up to isomorphism) unique bundle $p : E \onto X$ such that $p$ is a local homeomorphism (i.e., {\it \'etale} map) and $X$ is a {\it local Priestley space} (a space whose one-point-compactification is a Priestley space, see Subsection~\ref{subsec:pri} below). This result generalizes both Priestley duality \cite{P1} and recent results on Stone duality \cite{Sto1936} for skew Boolean algebras \cite{BCV,K,K3}, see also \cite{K2}.

Thus, in any setting where sheaves over Priestley spaces are present, in addition to whatever other structure, strongly distributive skew lattice structures are intrinsic. Let us name two examples of settings where sheaves over Priestley spaces are (implicitly) present. First, the classical representation of commutative unital rings as sheaves over their prime ideal spectra with the Zariski topology: Hochster \cite{Hochster} showed that the topological spaces which arise as prime ideal spectra are {\it exactly} the spaces which arise as the Stone duals of distributive lattices, which are now known as {\it spectral} spaces. Much more recently \cite{DuPo}, a sheaf representation over spectral spaces was obtained for MV-algebras, whose category is equivalent to a subcategory of lattice-ordered abelian groups. To place these results precisely in the setting of this paper, it suffices to remark that the category of spectral spaces and spectral functions is {\it isomorphic} to the category of Priestley spaces and continuous monotone functions (cf. \cite{Cor75, Bezetal}). Therefore, any sheaf representation over a spectral space can be equivalently regarded as a sheaf representation over a Priestley space.

In conclusion, our results show that the embeddability of strongly distributive skew lattices in partial function algebras is not coincidental, but a fully structural and natural phenomenon. They also show that strongly distributive skew lattices are intrinsic to sheaves over Priestley spaces and that each such lattice has a canonical embedding into a skew Boolean algebra, namely the skew Boolean algebra of all local sections with clopen domains over the corresponding base. Thus our results open the way to exploring the logic of such structures. In particular, they provide a candidate notion of Booleanization which may in turn allow the development of a non-commutative version of Heyting algebras.


The paper is organized as follows. We first provide background on skew lattices (Section~\ref{s:skew_prel}), Priestley duality (Section~\ref{s:com}), and sheaves (Subsection~\ref{s:etale_prel}). After these preliminaries, we will be ready to state our main theorem (Theorem~\ref{th:main1}), that the categories of left-handed strongly distributive skew lattices and sheaves over local Priestley spaces are dually equivalent. Starting the proof of this theorem, we first give a more formal description of the skew lattice of local sections of an \'etal\'e space, and show that it gives rise to a functor (Section~\ref{s:space_to_lattice}). To show that this functor is part of a dual equivalence, we will describe how to reconstruct the \'etal\'e space from its skew lattice of local sections (Section~\ref{s:new}), and give a general description of this process for an arbitrary left-handed strongly distributive skew lattice (Section~\ref{s:lattice_to_space}). Finally (Section~\ref{s:proof1}), we will put together the results from the preceding sections to prove our main theorem. We close with a few concluding remarks (Section~\ref{s:conc}).

\section{The category $\DSL$ of strongly distributive left-handed skew lattices}\label{s:skew_prel}
For an extensive introduction to the theory of skew lattices we refer the reader to \cite{L1,L2,L6,L3}. To make our exposition self-contained, we collect some definitions and basic facts of the theory.

\subsection{Skew lattices} \label{subsec:skewlatdef} A {\em skew lattice\footnote{In this paper, all skew lattices will be assumed to have a zero element.}} $S$ is an algebra $(S,\wedge,\vee,0)$ of type $(2,2,0)$, such that the operations $\wedge$ and $\vee$ are associative, idempotent and satisfy the absorption identities 
\begin{eqnarray*} 
x\wedge(x\vee y) =  x =  x\vee(x\wedge y), \\ 
(y\vee x)\wedge x  =  x  =  (y\wedge x)\vee x,
\end{eqnarray*}
and the $0$ element satisfies $x\wedge 0 = 0 = 0\wedge x$.
Note that a \emph{lattice} is a skew lattice in which $\wedge$ and $\vee$ are commutative.

The {\em partial order} $\leq$ on a skew lattice $S$ is defined by 
\[ x\leq y \iff x\wedge y= x = y\wedge x, \]
which is equivalent to $x\vee y= y = y\vee x$, by the absorption laws. Note that $0$ is the minimum element in the partial order $\leq$.

If $S$ and $T$ are skew lattices, we say a function $h : S \to T$ is a \emph{homomorphism} if it preserves the operations $\wedge$, $\vee$ and the zero element. We denote by $\Skew_0$ the category of skew lattices with zero  and homomorphisms between them.

\subsection{Lattices form a reflective subcategory of skew lattices}\label{subsec:refl}
If we denote by $\Lat_0$ the category of lattices with zero, then the full inclusion $\Lat_0 \to \Skew_0$ has a left adjoint, which can be explicitly defined using the equivalence relation $\D$, which is well known in semigroup theory \cite{H}. Recall that $\D$ is the equivalence relation on a skew lattice $S$ defined by $x\mathrel{\D} y$ if and only if $x\wedge y\wedge x=x$ and $y\wedge x\wedge y=y$, or equivalently, $x \vee y \vee x = x$ and $y \vee x \vee y = y$. The following is a version of the ``first decomposition theorem for skew lattices''.

\begin{theorem}[\cite{L1}, 1.7] \label{thm:latticereflection} Let $S$ be a skew lattice. The relation $\D$ is a congruence,  and $\alpha_S : S \to S/\D$ is a lattice quotient of $S$. For any homomorphism $h : S \to L$ where $L$ is a lattice, there exists a unique $\bar{h} : S/\D \to L$ such that $\bar{h} \circ \alpha_S = h$.
\end{theorem}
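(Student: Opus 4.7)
The plan is to establish the theorem in three stages: first show that $\D$ is a congruence on $S$, then deduce that the quotient $S/\D$ is a lattice, and finally verify the universal property.

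For the first stage, I would begin by checking $\D$ is an equivalence relation. Reflexivity follows from idempotence and symmetry is immediate from the definition. Transitivity is the subtle point: assuming $x\mathrel{\D} y\mathrel{\D} z$, one substitutes the identities $y = x\wedge y\wedge x$ and $z = y\wedge z\wedge y$ into $x\wedge z\wedge x$ and uses associativity and idempotence to reduce it to $x$ (and symmetrically for $z\wedge x\wedge z$). To show $\D$ is a congruence with respect to $\wedge$ (and dually for $\vee$), assume $x\mathrel{\D} x'$ and $y\mathrel{\D} y'$; the key computation is to expand $(x\wedge y)\wedge(x'\wedge y')\wedge(x\wedge y)$ using the $\D$-identities, the absorption laws, and associativity, and verify it collapses to $x\wedge y$, and symmetrically for $x'\wedge y'$.

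The central lemma for the second stage is that in any skew lattice, $x\wedge y\mathrel{\D} y\wedge x$ and dually $x\vee y\mathrel{\D} y\vee x$ for all $x,y\in S$. The $\wedge$-case reduces to verifying the ``rectangular band'' identity $(x\wedge y)\wedge(y\wedge x)\wedge(x\wedge y) = x\wedge y$, which follows from associativity, idempotence, and the absorption laws; the $\vee$-case is dual. Once this lemma is in place, $\wedge$ and $\vee$ descend to well-defined \emph{commutative} operations on $S/\D$; since the remaining skew-lattice axioms pass to the quotient, $S/\D$ is a lattice and $\alpha_S : S \to S/\D$ is a surjective skew-lattice homomorphism whose zero is the class of $0$.

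For the universal property, the crucial observation is that in a lattice $L$ the relation $\D$ collapses to equality: if $a\mathrel{\D} b$ in $L$, then by commutativity $a = a\wedge b\wedge a = a\wedge b = b\wedge a\wedge b = b$. Given a homomorphism $h:S\to L$, preservation of operations entails preservation of $\D$, so $x\mathrel{\D} y$ in $S$ forces $h(x)=h(y)$. The assignment $\bar h([x]):=h(x)$ is therefore well-defined; it preserves $\wedge$, $\vee$, and the zero because $\alpha_S$ is a surjective homomorphism and $h$ does, and it is uniquely determined by $\bar h\circ\alpha_S = h$ for the same reason. The main obstacle is the congruence verification together with the commutativity-up-to-$\D$ lemma, both of which demand patient bookkeeping with the absorption and associativity laws; the universal property then drops out almost for free from the triviality of $\D$ in lattices.
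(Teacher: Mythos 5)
The paper offers no proof of this statement: it is quoted from Leech \cite{L1}, whose argument rests on the Clifford--McLean theorem that every band is a semilattice of rectangular bands. Your three-stage architecture is the same as that standard proof, and two of the three stages are sound as written: the commutativity lemma is correct (indeed $(x\wedge y)\wedge(y\wedge x)\wedge(x\wedge y)=x\wedge y\wedge y\wedge x\wedge x\wedge y=x\wedge y\wedge x\wedge y=x\wedge y$ needs only associativity and idempotence, not absorption), and the universal property does drop out of the triviality of $\D$ on lattices exactly as you describe.

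The gap is in stage one, which is where essentially all of the content of the theorem lives. First, the identities you propose to substitute are not the ones $\D$ gives you: $x\mathrel{\D}y$ yields $x=x\wedge y\wedge x$ and $y=y\wedge x\wedge y$, not $y=x\wedge y\wedge x$. More seriously, with the correct identities the reduction you describe does not visibly terminate: substituting $x=x\wedge y\wedge x$ and $y=y\wedge z\wedge y$ gives $x=x\wedge y\wedge z\wedge y\wedge x$, and feeding this into $x\wedge z\wedge x$ only reproduces $x\wedge z\wedge x$ inside a longer word; associativity and idempotence alone do not collapse it. The same difficulty recurs, worse, in the congruence computation $(x\wedge y)\wedge(x'\wedge y')\wedge(x\wedge y)$. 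These two verifications are precisely the Clifford--McLean theorem, and the usual proofs do not proceed by brute equational reduction of the pointwise formula: one either defines the relation via principal ideals (so that transitivity is automatic and compatibility with multiplication is a short computation) or factors $\D$ as $\R\circ\L=\L\circ\R$ using the relations $\L$ and $\R$ of Theorem~\ref{th:decomp2}, each of which is easily checked to be a transitive congruence. A smaller point: you establish that the $\wedge$-defined relation is compatible with $\wedge$ and say ``dually for $\vee$,'' but the dual argument concerns the $\vee$-defined relation; you must first use the absorption laws to show the two relations coincide (as the paper's definition of $\D$ records) before compatibility with $\vee$ follows. So the plan is the right one, but the two hardest steps are asserted rather than proved.
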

In particular, any skew lattice homomorphism $h : S \to T$ induces a homomorphism between the lattice reflections, which, by a slight abuse of notation, we will also denote by $\overline{h}: S/\D\to T/\D$, and which is defined as the unique lift of the composite $\alpha_T \circ h : S \to T/\D$.

Recall that a lattice homomorphism $k : L_1 \to L_2$ is called \emph{proper} \cite{D} provided that for any $y \in L_2$ there is some $x \in L_1$ such that $k(x) \geq y$. Note that a lattice homomorphism between {\it bounded} lattices is proper if, and only if, it preserves the top element. In the case of skew lattices, we need to consider algebras which may not have a largest element, so we need the `unbounded' version of Priestley duality, where the natural morphisms are the proper homomorphisms, also see Section~\ref{s:com} below. We call a skew lattice homomorphism $h: S \to T$ {\em proper} provided that $\overline{h}$ is proper. 


\subsection{Strongly distributive skew lattices}\label{subsec:distskew}
There are several non-equivalent ways of defining distributivity when passing from lattices to the non-commutative setting. The objects of study in this paper are {\em strongly distributive skew lattices}\footnote{Note that what we call a strongly distributive skew lattice here is termed {\em meet bidistributive and symmetric skew lattice} in \cite{L6}.}, since it turns out that strongly distributive skew lattices allow a generalization of Priestley duality. Here a skew lattice is called \emph{strongly distributive} if it satisfies the identities
\begin{eqnarray}
x\wedge(y\vee z)= (x\wedge y)\vee (x\wedge z), \label{eq:dist1}\\
(y\vee z)\wedge x=(y\wedge x)\vee (z\wedge x). \label{eq:dist2}
\end{eqnarray}

If $S$ is a strongly distributive skew lattice then $S/\D$ is a distributive lattice. In order to state a `converse direction' for this fact, one needs two additional properties which hold in any strongly distributive skew lattice. A skew lattice $S$ is called {\em symmetric} if, for all $x, y \in S$, $x\vee y=y\vee x$ holds if and only if $x\wedge y=y\wedge x$ holds, and {\em normal} if each of the principal subalgebras $x\wedge S\wedge x$ forms a commutative sublattice of $S$. We then have the following result.
\begin{proposition}[\cite{L6},  Theorem 2.5] \label{prop:sdlchar} Let $S$ be a skew lattice. The following are equivalent:
\begin{enumerate}
\item $S$ is a strongly distributive skew lattice;
\item $S$ is normal and symmetric, and the lattice reflection $S/\D$ of $S$ is distributive.
\end{enumerate}
\end{proposition}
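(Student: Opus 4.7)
The plan is to treat the two directions separately, using the decomposition theorem (Theorem~\ref{thm:latticereflection}) as the main structural tool and organizing everything around the interplay between $S$ and its lattice reflection $S/\D$.

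For the direction (i) $\Rightarrow$ (ii), I would first observe that since $\D$ is a congruence, the lattice $S/\D$ inherits every identity satisfied by $S$; applied to \eqref{eq:dist1} and \eqref{eq:dist2} this immediately gives that $S/\D$ is distributive. To prove normality, I would take $u = x\wedge a\wedge x$ and $v = x\wedge b\wedge x$ in the principal subalgebra $x\wedge S\wedge x$, expand $u\wedge v$ and $v\wedge u$ using associativity, idempotency, and both distributivity identities, and then use absorption (the outer $x$'s guarantee that spurious terms collapse) to conclude equality; the argument for $\vee$ is dual. For symmetry, I would start from $x\wedge y = y\wedge x$, compute $(x\vee y)\wedge(y\vee x)$ by applying \eqref{eq:dist1} and \eqref{eq:dist2} to expand both sides, and use absorption to conclude $x\vee y = y\vee x$; the converse implication is handled dually.

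For the direction (ii) $\Rightarrow$ (i), I would fix $x,y,z \in S$ and aim to prove \eqref{eq:dist1}, with \eqref{eq:dist2} following by a symmetric argument. The two strategic ingredients are: (a) by distributivity of $S/\D$ the two sides $x\wedge(y\vee z)$ and $(x\wedge y)\vee(x\wedge z)$ are $\D$-related in $S$; and (b) both sides lie below $x$ in the natural partial order by absorption, so after conjugating by $x$ on both sides (which leaves each side fixed, using symmetry to reorder factors when necessary) they both sit in the principal subalgebra $x\wedge S\wedge x$. By normality this subalgebra is a commutative lattice, so restricted there the $\D$-relation is trivial, forcing actual equality.

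The main obstacle is the second direction, and more specifically the step which pushes $x\wedge y$, $x\wedge z$ and $x\wedge(y\vee z)$ into the commutative subalgebra $x\wedge S\wedge x$. Without a handedness hypothesis we do not generally have $x\wedge y \wedge x = x\wedge y$, so one must replace each term by its $x$-conjugate $x \wedge(\cdot)\wedge x$ and use symmetry to justify that this replacement preserves the identity to be proved. This reindexing, combined with the collapsing of $\D$-classes inside the commutative subalgebra supplied by normality, is the delicate point; once it is carried out cleanly, the distributive identity in $S/\D$ lifts unambiguously to $S$.
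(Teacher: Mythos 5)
First, a point of reference: the paper does not prove this proposition at all --- it is imported verbatim from Leech \cite[Theorem~2.5]{L6} --- so there is no in-paper argument to compare yours against, and your proposal has to stand on its own. Judged that way, it has genuine gaps in both implications. In (i)~$\Rightarrow$~(ii), the distributivity of $S/\D$ is indeed immediate, but the normality step as described cannot be carried out: for $u=x\wedge a\wedge x$ and $v=x\wedge b\wedge x$ the term $u\wedge v$ contains no occurrence of $\vee$, so there is nothing for \eqref{eq:dist1} or \eqref{eq:dist2} to act on; one must first manufacture joins (e.g.\ by comparing expansions of expressions such as $x\wedge(a\vee b)\wedge x$), and that is where the real work in Leech's proof lies. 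The symmetry step has a similar defect: starting from $x\wedge y=y\wedge x$, expanding $(x\vee y)\wedge(y\vee x)$ and $(y\vee x)\wedge(x\vee y)$ via \eqref{eq:dist1}, \eqref{eq:dist2} and absorption yields $(x\vee y)\wedge(y\vee x)=y\vee x$ and $(y\vee x)\wedge(x\vee y)=x\vee y$, i.e.\ only that $x\vee y$ and $y\vee x$ are $\R$-related, not that they are equal.

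In (ii)~$\Rightarrow$~(i), the step you yourself flag as delicate is where the argument fails to close. Normality does give that the two $x$-conjugates $x\wedge\bigl(x\wedge(y\vee z)\bigr)\wedge x$ and $x\wedge\bigl((x\wedge y)\vee(x\wedge z)\bigr)\wedge x$ coincide, since they are $\D$-related elements of the commutative sublattice $x\wedge S\wedge x$; but there is no route back from equality of conjugates to equality of the original terms --- $\D$-equivalent elements of $S$ with equal $x$-conjugates need not be equal --- and ``use symmetry to reorder factors'' does not supply one. The standard repair is to route the argument through Theorem~\ref{th:decomp2}: the hypotheses in (ii) pass to the reflections $S/\R$ and $S/\L$; in the left-handed (resp.\ right-handed) quotient the conjugation problem disappears because $x\wedge u\wedge x=x\wedge u$ (resp.\ $=u\wedge x$), so both sides of \eqref{eq:dist1} genuinely lie in $x\wedge S\wedge x$ and your $\D$-collapsing argument applies; and an identity holding in both quotients holds in $S$ because $S$ embeds in $S/\R\times S/\L$. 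Even then, \eqref{eq:dist2} in the left-handed case (and dually \eqref{eq:dist1} in the right-handed case) needs separate treatment, since $(y\vee z)\wedge x$ need not lie in $x\wedge S\wedge x$; this is one of the places where the symmetry hypothesis is actually consumed. Without some such device the proposal does not yield a proof.
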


\subsection{Left-handed skew lattices}\label{subsec:lefthand}
For our duality, we will focus on strongly distributive skew lattices which are left-handed. A skew lattice $S$ is called {\em left-handed} if it satisfies the identity
\[ x\wedge y\wedge x=x\wedge y \text{, or, equivalently, } x\vee y\vee x=y\vee x.\]
The notion of {\em right-handed} skew lattices is defined dually.

Left-handed strongly distributive skew lattices have some desirable algebraic properties that we collect here, for use in what follows.

\begin{lemma} \label{lem:useful} Let $S$ be a left-handed strongly distributive skew lattice, and let $a, a', b \in S$.
\begin{enumerate}
\item \label{it1} The semigroup $(S,\wedge)$ is left normal, i.e., $b \wedge a \wedge a' = b \wedge a' \wedge a$.
\item \label{it2} If $a, a' \leq b$ and $[a]_\D = [a']_\D$, then $a = a'$.
\end{enumerate}
\end{lemma}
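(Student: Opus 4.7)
For part (i), my plan is to use the fact (Proposition~\ref{prop:sdlchar}) that a strongly distributive skew lattice is normal, i.e., the principal subalgebra $b \wedge S \wedge b$ is a commutative sublattice. I will compute the element $(b\wedge a\wedge b)\wedge(b\wedge a'\wedge b)$ in two different ways: on one hand, using normality, it equals $(b\wedge a'\wedge b)\wedge(b\wedge a\wedge b)$; on the other hand, it can be simplified using idempotence and the left-handed identity $x\wedge y\wedge x=x\wedge y$ (applied once with $y=a$, once with $y=a'$, and once with $y=a\wedge a'$). The expected simplification is
\[ (b\wedge a\wedge b)\wedge(b\wedge a'\wedge b) \;=\; b\wedge a\wedge a', \]
and symmetrically $(b\wedge a'\wedge b)\wedge(b\wedge a\wedge b)=b\wedge a'\wedge a$. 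Equating these yields the desired identity $b\wedge a\wedge a'=b\wedge a'\wedge a$.

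For part (ii), the plan is to reduce the $\D$-equivalence $[a]_\D=[a']_\D$, which by definition says $a\wedge a'\wedge a=a$ and $a'\wedge a\wedge a'=a'$, to the simpler identities $a\wedge a'=a$ and $a'\wedge a=a'$ by one application of left-handedness. Then I will invoke part~(i) with the given $b$, which satisfies $b\wedge a=a$ and $b\wedge a'=a'$ since $a,a'\le b$. Applying (i) gives
\[ a \;=\; a\wedge a' \;=\; b\wedge a\wedge a' \;=\; b\wedge a'\wedge a \;=\; a'\wedge a \;=\; a', \]
which is the claim.

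The only step that requires genuine insight is the choice of the auxiliary element $(b\wedge a\wedge b)\wedge(b\wedge a'\wedge b)$ in the proof of (i); once that normalization move is made, both parts reduce to routine manipulations using the left-handed absorption identity $x\wedge y\wedge x=x\wedge y$. Part (ii) is then essentially a direct consequence of part (i), so (i) is the only real obstacle.
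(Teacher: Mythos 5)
Your proof is correct and follows essentially the same route as the paper: both parts rest on normality (via Proposition~\ref{prop:sdlchar}) combined with the left-handed identity $x\wedge y\wedge x=x\wedge y$, and your chain for (ii) is the paper's chain verbatim. Your detour in (i) through the element $(b\wedge a\wedge b)\wedge(b\wedge a'\wedge b)$ is just an explicit unpacking of the paper's one-line computation $b\wedge a\wedge a'=b\wedge a\wedge a'\wedge b=b\wedge a'\wedge a\wedge b=b\wedge a'\wedge a$.
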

\begin{proof}
(i) By Proposition~\ref{prop:sdlchar}, $S$ is normal. Therefore, using the definition of left-handedness, we get
\[b \wedge a \wedge a' = b \wedge a \wedge a' \wedge b = b \wedge a' \wedge a \wedge b = b \wedge a' \wedge a.\]

(ii) Since $a \, \D \, a'$, left-handedness yields $a \wedge a' = a$ and $a' \wedge a = a'$. Therefore,
\begin{align*}
 a &= b \wedge a &(a \leq b) \\
 &= b \wedge a \wedge a' &(a \, \D \, a') \\
 &= b \wedge a' \wedge a &\text{(item (i))} \\
 &= b \wedge a' &(a \, \D \, a') \\
 &= a' &(a' \leq b). &\qedhere
\end{align*}

\end{proof}

The algebraic object of study in this paper is the category $\DSL$  whose \emph{objects} are left-handed strongly distributive skew lattices with zero, and whose {\em morphisms} are proper homomorphisms. The reason we can restrict to \emph{left-handed} strongly distributive skew lattices without much loss of generality is the following. For a skew lattice $S$, we define the relation $\R$ on $S$ by $x \, \R \, y$ iff $x \wedge y = y$ and $y \wedge x = x$. Dually, we define the relation $\L$ on $S$ by $x \, \L \, y$ iff $x \wedge y = x$ and $y \wedge x = y$. We now have Leech's second decomposition theorem for skew lattices, which says the following.
 
\begin{theorem}[\cite{L1}, Theorem 1.15] \label{th:decomp2} The relations $\L$ and $\R$ are congruences for any skew lattice $S$. Moreover, $S/\L$ is the maximal right-handed image of $S$, $S/\R$ is the maximal left-handed image of $S$, and the following diagram is a pullback:
\begin{center}
\begin{tikzpicture}
\matrix (m) [matrix of math nodes, row sep=3.5em, column sep=2em, text height=1.5ex, text depth=0.25ex] 
{  S & & S/\R \\
      S/\L & & S/\D \\};
\path[->>] (m-1-1) edge (m-2-1);
\path[->>] (m-1-1) edge (m-1-3);
\path[->>] (m-2-1) edge  (m-2-3);
\path[->>] (m-1-3) edge  (m-2-3);
\end{tikzpicture}
\end{center}
\end{theorem}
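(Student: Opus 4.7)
The plan is to verify the four claims in sequence. That $\L$ and $\R$ are equivalence relations is routine: reflexivity comes from idempotence of $\wedge$, symmetry is visible in the definitions, and transitivity reduces to a single associativity calculation; for instance, when $x\L y$ and $y\L z$ we have $x\wedge z = (x\wedge y)\wedge z = x\wedge(y\wedge z) = x\wedge y = x$, and dually $z\wedge x = z$.

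For the congruence claim I would show that $x_1\L x_2$ implies $x_1\wedge z \L x_2\wedge z$, $z\wedge x_1\L z\wedge x_2$, and the analogous pair with $\vee$; the argument for $\R$ is symmetric. The two $\wedge$-sided identities follow from associativity and idempotence of the band $(S,\wedge)$ alone. The two $\vee$-sided identities are the non-trivial ones, since they require crossing between the two operations via the absorption laws. A useful observation that cuts down on the work is that Green's $\L$-relation on $(S,\wedge)$ coincides with Green's $\R$-relation on $(S,\vee)$ (and vice versa), a direct consequence of absorption, so that one pair of checks formally reduces to the other.

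Third, I would show that $S/\R$ is left-handed by verifying $(x\wedge y\wedge x)\,\R\,(x\wedge y)$, a short $(S,\wedge)$-calculation, and dually for $S/\L$. Maximality then follows from the key observation that in any left-handed skew lattice $T$, $u\,\R\,v$ forces $u = u\wedge v\wedge u = u\wedge v = v$ by the defining identity of left-handedness; hence any homomorphism from $S$ into a left-handed skew lattice must collapse $\R$ and so factors uniquely through $S/\R$. The maximality of $S/\L$ among right-handed quotients is dual.

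Finally, for the pullback property it suffices to show that the comparison map $S \to S/\L \times_{S/\D} S/\R$ given by $x\mapsto ([x]_\L,[x]_\R)$ is a bijection. Injectivity amounts to $\L\cap\R = \Delta_S$: if both $x\L y$ and $x\R y$ then $x = x\wedge y = y$. For surjectivity, given a compatible pair $([u]_\L,[v]_\R)$ with $u\,\D\,v$, the standard semigroup identity $\D = \L\circ\R$ (valid in any band, provable here using the absorption laws) supplies some $w\in S$ with $u\L w\R v$, and $w$ is the required preimage. The main technical obstacle is the $\vee$-sided congruence verification, and in particular establishing $\D = \L\circ\R$; once these are in hand, the rest of the argument is straightforward manipulation of the skew lattice axioms.
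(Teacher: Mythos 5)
The paper itself offers no proof of this theorem---it is quoted from Leech \cite{L1}---so your argument can only be assessed on its own terms. Its architecture is the standard one (Kimura-style spined-product decomposition adapted to skew lattices) and most steps are correct: the equivalence-relation checks, the identification of $\L$ on $(S,\wedge)$ with Green's $\mathcal{R}$ on $(S,\vee)$ via absorption, the identity $(x\wedge y\wedge x)\,\R\,(x\wedge y)$ giving left-handedness of $S/\R$, the observation that $\R$ is trivial on any left-handed skew lattice (which yields the universal property), injectivity of the comparison map via $\L\cap\R=\Delta_S$, and surjectivity via $\D=\L\circ\R$, for which $w:=v\wedge u$ is an explicit witness when $u\,\D\,v$.

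There is, however, one genuine gap, in the congruence step. You assert that the two $\wedge$-sided compatibilities of $\L$ ``follow from associativity and idempotence of the band $(S,\wedge)$ alone.'' Only one of them does. The relation $\L$ is Green's $\mathcal{L}$ of the band $(S,\wedge)$, and Green's $\mathcal{L}$ is automatically a \emph{right} congruence, so $x_1\,\L\,x_2$ does give $x_1\wedge z\,\L\,x_2\wedge z$ for free; but it is \emph{not} a left congruence in an arbitrary band. Indeed, a band has both of its Green's relations as two-sided congruences precisely when it is a \emph{regular} band, i.e.\ one satisfying $x\wedge y\wedge x\wedge z\wedge x=x\wedge y\wedge z\wedge x$, and non-regular bands exist (the free band on three generators, for instance). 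So the implication $x_1\,\L\,x_2\Rightarrow z\wedge x_1\,\L\,z\wedge x_2$ cannot be extracted from associativity and idempotence; it is precisely here (and at the mirror-image checks for $\R$ and for $\vee$) that one must first prove, using the absorption identities, that both reducts $(S,\wedge)$ and $(S,\vee)$ of a skew lattice are regular bands. That regularity lemma is the missing ingredient of your proposal; once it is established, the remaining compatibility checks and the rest of your argument go through.
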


\subsection{Primitive skew lattices}
In what follows, primitive skew lattices will play an important role. A skew lattice $S$ is called {\em primitive} if it has only one non-zero $\D$-class, or, equivalently, if $S/\D$ is the bounded distributive lattice ${\bf 2}=\{0,1\}$. 

If $T$ is any set, there is a unique (up to isomorphism) primitive left-handed skew lattice $P_T$ with $T$ as its only non-zero $\D$-class (see figure~\ref{fig:primitive}). The operations inside this $\D$-class are determined by lefthandedness: $t \wedge t'=t$ and $t\vee t'=t'$, for any $t,t'\in T$. Note that, clearly, $P_T$ is strongly distributive.

\begin{figure}[htp]
\begin{tikzpicture}
\po{0,0}
\draw node at (0,0) [below] {$0$};
\draw node at (-2,1) [above] {$t$};
\po{-2,1}
\draw node at (-1,1) [above] {$t'$};
\po{-1,1}
\draw node at (0,1) {$...$};
\po{1,1}
\draw node at (1,1) [above] {};
\draw node at (2,1) {$...$};

\li{(0,0)--(-2,1)}
\li{(0,0)--(-1,1)}
\li{(0,0)--(1,1)}
\end{tikzpicture}
\caption{The primitive skew lattice $P_T$\label{fig:primitive}}
\end{figure}
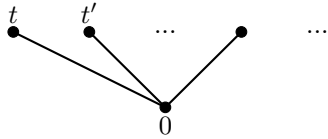

\section{Sheaves over Priestley spaces}\label{s:com}
In this section we first outline a slight modification of classical Priestley duality for bounded distributive lattices which is available for distributive lattices that may not have a largest element. For an extensive introduction to bounded distributive lattices and Priestley duality, we refer the reader to \cite{P,P1,DP}. We then define the category of sheaves over local Priestley spaces, and state our main theorem.

\subsection{The category $\Dist_0$ of distributive lattices with zero}
The {\em objects} of the category $\Dist_0$ are {\em distributive lattices with a zero element}. The \emph{morphisms} of the category $\Dist_0$ are the proper lattice homomorphisms (see \ref{subsec:refl} above).


\subsection{The category $\LPS$ of local Priestley spaces}
Recall that  a {\em Boolean space} \cite{Sto1936} is a compact Hausdorff space in which the clopen sets form a basis.

A subset $E$ of a partially ordered set (poset) $X$ is called an {\em upset} or an {\em upward closed subset} provided that for any $x\in E$ and $y\geq x$ we have $y\in E$. {\em Downsets} or {\em downward closed subsets} of $X$ are defined order-dually. A map $f: X\to Y$ between partially ordered sets is called {\em monotone} if $x_1\leq x_2$ in $X$ implies $f(x_1)\leq f(x_2)$ in $Y$.

We call a triple $(X,\tau,\leq)$ a \emph{partially ordered topological space} if $(X,\tau)$ is a topological space and $(X,\leq)$ is a poset. Let $\Up$ be the set of all open upsets of $X$ and $\Down$ be the set of all open downsets of $X$. It is easy to see that  $\Up$ and $\Down$ are topologies on $X$. A partially ordered topological space $(X,\tau)$ is called {\em totally order-disconnected} \cite{P} provided that for any $x,y\in X$ such that $x\not\leq y$, there exist disjoint clopen sets $U \in \Up$ and $V \in \Down$ such that $x\in U$ and $y\in V$. A \emph{Priestley space} $(X,\tau,\leq)$ is a partially ordered topological space which is compact and totally order-disconnected. The topological reduct $(X,\tau)$ of a Priestley space $(X,\tau,\leq)$ is a Boolean space.  

Priestley duality \cite{P1} is a dual categorical equivalence between the category of bounded distributive lattices $\Dist_{01}$ and the category $\PS$ of Priestley spaces with continuous monotone maps (see below for more details). In order to generalize Priestley duality to the category $\Dist_{0}$, we will use a `local' version of Priestley spaces.

Recall that for any topological space $(X,\tau)$, its {\it one-point-compactification} $(\hat{X}, \hat{\tau})$ is defined by $\hat{X} = X \cup \{\ast\}$, where $\ast \not\in X$, and $U \in \hat{\tau}$ iff either $U \in \tau$, or $\ast \in U$ and $X \setminus U$ is closed and compact for $\tau$. For an ordered space $(X,\tau,\leq)$, we define the \emph{ordered one-point-compactification} $(\hat{X},\hat{\tau},\hat{\leq})$ by letting $(\hat{X},\hat{\tau})$ be the (topological) one-point-compactification, and $\hat{\leq}$ the extension of $\leq$ by adding $\ast$ as a maximum point. 

We say $(X,\tau,\leq)$ is a \emph{local Priestley space} if its ordered one-point-compactification $(\hat{X},\hat{\tau},\hat{\leq})$ is a Priestley space. We define \emph{the category $\LPS$ of local Priestley spaces}, in which a morphism $f : (X,\tau_X,\leq_X) \to (Y,\tau_Y,\leq_Y)$ is the restriction of a continuous monotone map between the one-point-compactifications $f : \hat{X} \to \hat{Y}$ for which $f^{-1}(\ast_Y) = \{\ast_X\}$. 

\begin{remark}
It is possible to give an equivalent definition of this category without referring to the ordered one-point-compactification: local Priestley spaces are exactly the totally order-disconnected spaces for which the space $(X,\Down)$ has a basis consisting of $\tau$-compact open downsets, and $\LPS$-morphisms $(X,\tau_X,\leq_X) \to (Y,\tau_Y,\leq_Y)$ are equivalently described as continuous monotone maps with the further property that the inverse image of a $\tau_Y$-compact set is $\tau_X$-compact.
\end{remark}

\subsection{Priestley duality}\label{subsec:pri} Let $D$ be a bounded distributive lattice. The \emph{spectrum} of $D$ is the Priestley space $\Sp(D) := (X,\tau,\leq)$, defined as follows. The points of $X$ are the prime filters of $D$, $\tau$ is the topology defined by taking as a subbasis for the open sets the collection $\{\hat{a}, \hat{a}^c \ | \ a \in D\}$, where $\hat{a} := \{p \in X \ | \ a \in p\}$, and $\leq$ is the reverse inclusion order on prime filters. A homomorphism $h : D_1 \to D_2$ of bounded distributive lattices yields a continuous monotone function $\Sp(h) : \Sp(D_2) \to \Sp(D_1)$ by sending $p \in \Sp(D_2)$ to $h^{-1}(p) \in \Sp(D_1)$.

Conversely, if $(X,\tau,\leq)$ is a Priestley space, we let $\La(X,\tau,\leq)$ be the bounded distributive lattice of clopen downsets with the set-theoretic operations. A continuous monotone map $f : (X,\tau_X,\leq_X) \to (Y,\tau_Y,\leq_Y)$ yields a lattice homomorphism $\La(f) : \La(Y,\tau_Y,\leq_Y) \to \La(X,\tau_X,\leq_X)$ by sending a clopen downset $U \in \La(Y,\tau_Y,\leq_Y)$ to $f^{-1}(U) \in \La(X,\tau_X,\leq_X)$.

\begin{theorem}[Classical Priestley duality]\label{th:pr_dual} The contravariant functors $\Sp: \Dist_{01}\to \PS$ and $\La:\PS\to\Dist_{01}$ establish a dual equivalence between the categories $\Dist_{01}$ and $\PS$.
The natural isomorphisms $\alpha:1_{\Dist_{01}}\to \La\Sp$ and $\beta: 1_{\PS}\to \Sp\La$ are given by
$$
\alpha_D(a) := \hat{a} =\{p \in \Sp(D): a \in p\},
$$
$$
\beta_X(p) := N_p=\{U \in \La(X,\tau,\leq) : p \in U\}.
$$
\end{theorem}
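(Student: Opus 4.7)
The plan is to verify the four standard components of a dual equivalence: that $\Sp$ and $\La$ are well-defined contravariant functors landing in the stated categories, that $\alpha$ and $\beta$ are natural, and that each component $\alpha_D$ and $\beta_X$ is an isomorphism in the appropriate category.

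First I would check that $\Sp(D) = (X,\tau,\leq)$ is in fact a Priestley space. Compactness of $\tau$ I would establish via Alexander's subbase lemma: if a family of sets drawn from the subbasis $\{\hat{a}\} \cup \{\hat{a}^c\}$ covers $X$, then the collection of $a$'s whose $\hat{a}^c$ appears in the cover generates (together with the negations of the other $a$'s) an inconsistent filter-ideal pair, which contradicts the prime filter (= prime ideal) theorem for distributive lattices. For total order-disconnectedness, suppose $p \not\leq q$, i.e., $p \not\supseteq q$; then there is $a \in q \setminus p$, and $\hat{a}$ is a clopen downset containing $q$ but not $p$, while its complement $\hat{a}^c$ is a clopen upset containing $p$. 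Functoriality of $\Sp$ follows from the identity $\Sp(h)^{-1}(\hat{a}) = \widehat{h(a)}$, which implies both continuity and monotonicity of $\Sp(h)$. The assignment $\La$ on objects yields a bounded distributive lattice because clopen downsets are closed under finite unions and intersections with $\emptyset$ and $X$ as bounds, distributivity being inherited from the powerset; and functoriality on morphisms is immediate since the preimage of a clopen downset under a continuous monotone map is again a clopen downset.

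Next I would verify that $\alpha_D$ is a lattice isomorphism. The map is well-defined into $\La(\Sp(D))$ by the observation above that each $\hat{a}$ is a clopen downset. It preserves $\wedge$ and $\vee$ because prime filters are closed under meets and prime (so $\widehat{a \wedge b} = \hat{a} \cap \hat{b}$ and $\widehat{a \vee b} = \hat{a} \cup \hat{b}$), and preserves $0, 1$ trivially. Injectivity reduces, by applying the prime filter theorem again, to the fact that distinct elements of $D$ can be separated by a prime filter. For surjectivity, given a clopen downset $U$, express $U$ as a union of basic open sets of the form $\hat{a} \cap \hat{b}^c$; compactness of $U$ (it is closed in a compact space) yields a finite subcover, and, by reorganizing and using the downset property together with Priestley separation on pairs of the form $(p, q)$ with $p \not\leq q$, one finds a single $a \in D$ with $U = \hat{a}$.

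For $\beta_X$, the set $N_p$ is readily seen to be a prime filter of $\La(X,\tau,\leq)$. Injectivity uses total order-disconnectedness: distinct points $p \neq q$ either satisfy $p \not\leq q$ or $q \not\leq p$, and in either case a separating clopen downset lies in exactly one of $N_p, N_q$. For surjectivity, given a prime filter $F$, I would show that
\[ Z := \bigcap_{U \in F} U \ \cap \ \bigcap_{V \notin F} V^c \]
is non-empty by a finite-intersection-property argument leveraging compactness and the primeness of $F$, and that $Z$ is a singleton $\{p\}$ by the injectivity already obtained; then $F = N_p$ by construction. Continuity and the property of being an order-homeomorphism follow from the identity $\beta_X^{-1}(\widehat{U}) = U$ for each clopen downset $U$. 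The main obstacle across the whole argument is the use of the prime filter theorem in three places (compactness of $\Sp(D)$, injectivity of $\alpha_D$, and surjectivity of $\beta_X$), together with the combinatorial bookkeeping in the surjectivity of $\alpha_D$ that pins down a single generator $a \in D$ from a finite cover; naturality of $\alpha$ and $\beta$ is then a routine verification from the definitions and the identity $\Sp(h)^{-1}(\hat{a}) = \widehat{h(a)}$.
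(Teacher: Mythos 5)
The paper does not actually prove this theorem: it is stated as classical Priestley duality and attributed to Priestley's 1972 paper \cite{P1}, so there is no in-paper argument to compare yours against. Your sketch is the standard textbook proof (as in Davey--Priestley) and is essentially correct: the prime filter theorem does carry the load in exactly the three places you identify, and your verifications of total order-disconnectedness, of the homomorphism identities $\widehat{a\wedge b}=\hat{a}\cap\hat{b}$ and $\widehat{a\vee b}=\hat{a}\cup\hat{b}$, and of the surjectivity of $\beta_X$ via the finite intersection property are all sound (note only that, with $\leq$ being reverse inclusion, $\hat{a}$ is a clopen \emph{downset}, which you use correctly).

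The one place where your sketch is thinner than it should be is the surjectivity of $\alpha_D$. Writing a clopen downset $U$ as a finite union of sets $\hat{a}\cap\hat{b}^c$ and then ``reorganizing'' does not by itself produce a single generator; the standard route applies compactness \emph{twice}: first, for each fixed $q\notin U$, cover $U$ by clopen downsets $\hat{a_{p,q}}$ separating $p\in U$ from $q$ and take the join of a finite subcover to get $a_q$ with $U\subseteq\hat{a_q}$ and $q\notin\hat{a_q}$; then cover the compact set $U^c$ by the sets $\hat{a_q}^c$ and take the meet of a finite subfamily to obtain $a$ with $\hat{a}=U$. This uses only the separation property and compactness, and makes the finite-cover bookkeeping you allude to precise; with that step filled in, the argument is complete.
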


\begin{remark}\label{rem:natural}
Priestley duality is a {\it natural duality}, in the sense that the functors $\Sp$ and $\La$ are naturally isomorphic to $\hom$-functors into a so-called dualizing object, as follows. Let $\2$ denote both the unique $2$-element distributive lattice in the category $\Dist_{01}$, and the unique $2$-element Priestley space with non-trivial order in the category $\PS$. A prime filter $p$ in a bounded distributive lattice $D$ then corresponds to the lattice homomorphism $h_p : D \to \2$ for which $p = h_p^{-1}(1)$, and a clopen  downset $U$ of a Priestley space $(X,\tau,\leq)$ corresponds to the continuous monotone function $\chi_U : X \to \2$ for which $U = \chi_U^{-1}(0)$.
\end{remark}

Priestley duality can be generalized to a duality between the categories $\Dist_0$ and $\LPS$, as follows. First of all, the category $\Dist_0^h$ of distributive lattices with a zero element and (not necessarily proper) homomorphisms is dually equivalent to the category $\PS_\ast$ of Priestley spaces with a largest element $\ast$, and continuous monotone maps between them. This duality can be described using prime filters, or as a natural duality via the dualizing object $\2$, in a way analogous to Remark~\ref{rem:natural}, cf. \cite[Section 2.8]{DaveyClark}. 

To obtain a duality with the non-full subcategory $\Dist_0$ of $\Dist_0^h$, we now reason as follows. For objects $D, E$ of $\Dist_0$, the proper homomorphisms correspond to those morphisms in $\PS_\ast$ for which $f^{-1}(\ast) = \{\ast\}$. Objects of $\LPS$ can equivalently be described as spaces whose ordered one-point-compactifications lie in $\PS_\ast$; therefore, the category $\LPS$ is isomorphic to the non-full subcategory of $\PS_\ast$ containing only the morphisms which satisfy $f^{-1}(\ast) = \{\ast\}$. Thus, $\LPS$ is dually equivalent to the category $\Dist_0$.

A direct, but slightly more cumbersome, description of this duality can be given as follows. If $D$ is an object in $\Dist_0$, then $\Dist_0^h(D,\2)$ is an object of $\PS_\ast$, where $\ast$ is the constant zero function, which is indeed the largest element, since the order in $\Dist_0^h(D,\2)$ is {\it reverse} pointwise. Let $\Sp(D) := \Dist_0^h(D,\2) \setminus \{\ast\} = \Dist_0(D,\2)$, since the only non-proper homomorphism $D \to \2$ is $\ast$. Then $\Sp(D)$ is a local Priestley space, and the duals of {\it proper} homomorphisms $D \to E$ restrict correctly to functions $\Sp(E) \to \Sp(D)$, by the arguments given in the previous paragraph.
Conversely, if $(X,\tau,\leq)$ is a local Priestley space, let $\La(X,\tau,\leq)$ be the distributive lattice of clopen proper downsets of the one-point-compactification $(\hat{X},\hat{\tau},\hat{\leq})$, or equivalently, compact open downsets of $(X,\tau,\leq)$. We then have the following corollary to Priestley duality:
\begin{corollary} The contravariant functors $\Sp: \Dist_{0}\to \LPS$ and $\La:\LPS\to\Dist_{0}$ establish a dual equivalence between the categories $\Dist_{0}$ and $\LPS$.
\end{corollary}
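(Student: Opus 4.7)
My plan is to formalise the argument sketched in the paragraphs immediately preceding the corollary, reducing the claim to the dual equivalence between $\Dist_0^h$ and $\PS_\ast$ cited from \cite[Section 2.8]{DaveyClark}. Passage from a local Priestley space $(X,\tau,\leq)$ to its ordered one-point compactification $(\hat X,\hat\tau,\hat\leq)$ is a functorial bijection between the objects of $\LPS$ and those of $\PS_\ast$, with inverse given by deleting $\ast$. Hence $\LPS$ is isomorphic to the non-full subcategory of $\PS_\ast$ whose morphisms $f$ satisfy $f^{-1}(\ast)=\{\ast\}$, and it suffices to show that, under the $\Dist_0^h$--$\PS_\ast$ duality, the proper homomorphisms of $\Dist_0$ correspond exactly to these $\ast$-reflecting morphisms.

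The crux is this proper/reflecting correspondence. Under the natural duality, the point $\ast \in \Sp(D)=\Dist_0^h(D,\2)$ is the constant-zero homomorphism $0_D$, and the dual of a homomorphism $h : D \to E$ is $h^* : q \mapsto q \circ h$, which always sends $\ast_E$ to $\ast_D$. If $h$ is proper and $q \in \Sp(E)\setminus\{\ast_E\}$, then any $y \in E$ with $q(y)=1$ admits some $x \in D$ with $h(x)\geq y$, so $q(h(x))=1$ and $q\circ h \neq \ast_D$. Conversely, if $h$ is not proper, there exists $y\in E$ with $y \not\leq h(x)$ for all $x \in D$; this $y$ does not lie in the ideal generated by $h(D)$ in $E$, so by the prime-filter separation theorem for distributive lattices there is a non-empty prime filter $q$ containing $y$ and disjoint from $h(D)$, which gives $q \neq \ast_E$ but $q\circ h = \ast_D$. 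Thus $h$ is proper if and only if $(h^*)^{-1}(\ast_D)=\{\ast_E\}$.

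It remains to check that the functors $\Sp$ and $\La$ as defined in the corollary agree, under the isomorphism $\LPS \cong $ (non-full subcategory of $\PS_\ast$), with the appropriate restrictions of the $\Dist_0^h$--$\PS_\ast$ functors. On objects, $\Sp(D)=\Dist_0(D,\2)=\Dist_0^h(D,\2)\setminus\{\ast\}$ by construction, and the natural isomorphism $\alpha$ carries over verbatim. For $\La$, intersection with $X$ sets up a bijection between clopen downsets of $\hat X$ and clopen downsets of $X$, and one verifies that a clopen downset of $\hat X$ avoids $\ast$ if and only if its trace on $X$ is a compact open downset of $(X,\tau,\leq)$; this matches the two equivalent descriptions of $\La(X,\tau,\leq)$ given in the text, and $\beta$ restricts without issue. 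The main obstacle I anticipate is the proper/reflecting correspondence above; everything else is a routine transport of structure along the ordered one-point compactification.
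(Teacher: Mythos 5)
Your proposal is correct and follows essentially the same route as the paper, whose ``proof'' of this corollary is precisely the two paragraphs preceding it: reduce to the $\Dist_0^h$--$\PS_\ast$ duality from \cite[Section 2.8]{DaveyClark}, identify $\LPS$ with the non-full subcategory of $\PS_\ast$ of $\ast$-reflecting maps, and match proper homomorphisms with those maps. Your prime-filter-separation argument for the converse direction of the proper/reflecting correspondence is a correct filling-in of a detail the paper leaves implicit.
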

\begin{remark}\label{rem:natural2}
The duality stated in this corollary is not a natural duality with respect to the dualizing object $\2$ as in Remark~\ref{rem:natural} above. However, it is still true that the set underlying $\Sp(D)$ is in a bijective correspondence with $\Dist_0(D,\2)$, for any $D \in \Dist_0$. Under this correspondence, a basic open $\hat{a}$ gets sent to the set $\{h \in \Dist_0(D,\2) : h(a) = 1\}$.
\end{remark}


\subsection{Sheaves and \'etal\'e spaces}\label{s:etale_prel}

Preliminaries on sheaves and \'{e}tal\'e spaces can be found in any textbook on sheaf theory, e.g. in \cite{Br,MM}. We will recall the basics and notation that we will use here.

Let $X$ be a topological space. We denote by $\Omega(X)$ the poset of open subsets of $X$, ordered by inclusion. In particular, $\Omega(X)$ is a category. A \emph{presheaf} on $X$ is a contravariant functor $E$ from $\Omega(X)$ to the category of sets. In this paper, we will always assume that $E(U) \neq \emptyset$ for all $U \in \Omega(X)$, that is, we only consider presheaves {\it with global support}. If the presheaf $E$ is clear from the context, and $U, V \in \Omega(X)$ with $U \subseteq V$, then we write $(-)|_U : E(V) \to E(U)$ for the morphism $E(U \subseteq V)$, and call it the \emph{restriction map from $V$ to $U$}. 

If $(U_i)_{i \in I}$ is a cover of an open set $U$, then we say a family of elements $(s_i)_{i \in I}$, where $s_i \in E(U_i)$ for each $i \in I$, is \emph{compatible} if for all  $i, j \in I$, $s_i|_{U_i \cap U_j} = s_j|_{U_i \cap U_j}$.
A presheaf $E$ on $X$ is called a \emph{sheaf} if for any such compatible family there exists a unique $s \in E(U)$ such that $s|_{U_i} = s_i$ for all $i \in I$. For reasons that will become apparent later, we will also denote this unique element $s$ by $\bigvee_{i \in I} s_i$, and call it the \emph{patch} of the family $(s_i)_{i \in I}$.

If $E$ is a sheaf on a topological space $X$ and $f : X \to Y$ is a continuous map, we define the functor $f_\ast E$ on $\Omega(Y)$ on objects by $(f_\ast E)(V) := E(f^{-1}(V))$, and we call $f_\ast E$ the \emph{direct image sheaf} of $E$ under $f$. It is a well known fact in sheaf theory that $f_\ast E$ is indeed again a sheaf \cite[Ch. II, \S 1]{MM}.


In this paper, a \emph{morphism} from a sheaf $E$ on $X$ to a sheaf $F$ on $Y$ is a pair $(f,\lambda)$, where $f : X \to Y$ is a morphism of the base spaces, and $\lambda : F \Rightarrow f_\ast E$ is a natural transformation. In the proof of Proposition~\ref{prop:fullfaithful}, we will use the following lemma.
\begin{lemma}\label{lem:basicdet}
Suppose $(f,\lambda)$ and $(f,\lambda')$ are morphisms from a sheaf $E$ on $X$ to a sheaf $F$ on $Y$, and suppose that $\mathcal{B}$ is a basis for the space $Y$. If, for all $V \in \mathcal{B}$, $\lambda_V = \lambda'_V$, then $\lambda = \lambda'$.
\end{lemma}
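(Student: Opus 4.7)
The plan is a standard ``extend-from-a-basis'' argument using naturality and the sheaf axiom on the codomain $f_\ast E$. Fix an arbitrary open $V\subseteq Y$ and an arbitrary $s\in F(V)$; the goal is to show $\lambda_V(s)=\lambda'_V(s)$ as elements of $(f_\ast E)(V)=E(f^{-1}(V))$.

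First, since $\mathcal{B}$ is a basis for $Y$, choose a family $(V_i)_{i\in I}$ of basic opens with $V=\bigcup_{i\in I}V_i$. By naturality of $\lambda$ applied to each inclusion $V_i\subseteq V$, the square relating $F(V)\to F(V_i)$ with $(f_\ast E)(V)\to (f_\ast E)(V_i)$ commutes, which gives
\[
\lambda_V(s)\big|_{f^{-1}(V_i)}=\lambda_{V_i}\bigl(s|_{V_i}\bigr),
\]
and analogously for $\lambda'$. By the hypothesis $\lambda_{V_i}=\lambda'_{V_i}$ (since $V_i\in\mathcal{B}$), the right-hand sides agree, so $\lambda_V(s)$ and $\lambda'_V(s)$ restrict to the same element of $E(f^{-1}(V_i))$ for every $i\in I$.

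Finally, since $f$ is continuous, $\bigl(f^{-1}(V_i)\bigr)_{i\in I}$ is an open cover of $f^{-1}(V)$, so the sheaf condition on $E$ (equivalently, on $f_\ast E$, which is known to be a sheaf) yields $\lambda_V(s)=\lambda'_V(s)$. As $V$ and $s$ were arbitrary, $\lambda=\lambda'$. The only point requiring any care is verifying that naturality is indeed applicable to $f_\ast E$ along the inclusions $V_i\subseteq V$, which is immediate from the definition of $f_\ast E$ as the composition of $f^{-1}$ with $E$; there is no genuine obstacle.
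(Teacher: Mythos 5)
Your argument is correct: naturality along the inclusions $V_i\subseteq V$ reduces the comparison to basic opens, and the uniqueness clause of the sheaf condition on $E$ (applied to the cover $(f^{-1}(V_i))_{i\in I}$ of $f^{-1}(V)$) finishes the job. The paper states this lemma without proof, treating it as standard, and your write-up is exactly the routine argument it leaves implicit.
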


We now sketch the basic correspondence between sheaves and \'etal\'e spaces. See \cite[Ch. II, \S 5]{MM} for more details.

Let $X$ be a topological space. A {\it bundle over $X$} is a topological space $E$ together with a continuous map $p : E \to X$. An {\em \'{e}tal\'e space} or {\em \'etal\'e bundle over $X$} is a bundle $p : E \to X$ which is a {\it local homeomorphism}, that is, for any $e \in E$, there exists an open neighbourhood $V$ of $e$ such that $p(V)$ is open in $X$ and $p|_V : V \to p(V)$ is a homeomorphism. 
If $U$ is an open subset of $X$, a \emph{(local) section over U} is a continuous map $s : U \to E$ such that $p \circ s = \mathrm{id}_U$. We denote by $E(U)$ the set of sections over $U$. 
The equivalence classes induced by $p$ are called \emph{stalks} or \emph{fibers}: for $x \in X$, we denote the stalk $p^{-1}(\{x\})$ by $E_x$. 

If $p : E \to X$ is an \'etal\'e space, then the assignment $U \mapsto E(U)$, the local sections over $U$, naturally extends to a sheaf on $X$: if $U \subseteq V$, then we have the map $E(V) \to E(U)$ which sends a local section $s$ over $V$ to its restriction $s|_U$ over $U$. We call $E$ the {\it sheaf associated to the \'etale map $p$}.

If $F$ is a sheaf on $X$, then for any $x \in X$ we define the \emph{stalk} $F_x$ to be the colimit of the diagram of sets $F(U)$, where $U$ ranges over the open neighbourhoods of $x$. More explicitly, 
\[F_x = \left(\bigsqcup_{x \in U} F(U)\right)/\sim_x,\]
where, for $s \in F(U)$ and $t \in F(V)$, we have $s \sim_x t$ iff there exists an open neighbourhood $W$ of $x$ such that $W \subseteq U \cap V$ and $s|_W = t|_W$. The classes in $F_x$ are called \emph{germs} and denoted by $\germ_x s$. The {\it \'etal\'e space associated to $F$} has $\bigsqcup_{x \in X} F_x$ as its underlying set. Any $s \in F(U)$ yields a function $\hat{s} : U \to \bigsqcup_{x \in X} F_x$ by sending $x \in U$ to $\germ_x s$. The topology on $\bigsqcup_{x \in X} F_x$ is defined by taking the sets $\hat{s}(U)$ as a basis, where $U$ ranges over $\Omega(X)$ and $s$ ranges over $F(U)$. One may now prove that these assignments from an \'etale map over $X$ to a sheaf over $X$ and vice versa are well-defined and mutually inverse up to isomorphism, as in \cite[Corollary II.6.3]{MM}. Sheaves with global support correspond to \'etale maps which are surjective.

\subsection{The category of sheaves over local Priestley spaces}\label{subsec:sheaves}
If $E$ is a sheaf on a topological space $X$ and $f : X \to Y$ is a continuous map, recall that the direct image sheaf $f_\ast E$ on $\Omega(Y)$ is defined on objects by $(f_\ast E)(V) := E(f^{-1}(V))$.

We will denote by $\Sh(\LPS)$ the category of \emph{sheaves over local Priestley spaces}: an \emph{object} is $(X,\tau,\leq,E)$, where $(X,\tau,\leq)$ is a local Priestley space, and $E$ is a sheaf.

A \emph{morphism} from $(X,\tau,\leq,E)$ to $(Y,\tau,\leq,F)$ is a pair $(f,\lambda)$, where the function $f : (X,\tau,\leq) \to (Y,\tau,\leq)$ is a morphism in $\LPS$, and $\lambda : F \Rightarrow f_\ast E$ is a natural transformation; see the diagram in Figure~\ref{fig:sheafmorphism}. If $(f,\lambda) : (X,E) \to (Y,F)$ and $(g,\mu) : (Y,F) \to (Z,G)$ are morphisms in $\Sh(\LPS)$, their composition is defined by $(gf, \sigma)$, where $\sigma_U := \lambda_{g^{-1}(U)} \circ \mu_U$.
\begin{figure}[htp]
\begin{tikzpicture}
\matrix (m) [matrix of math nodes, row sep=3.5em, column sep=2em, text height=1.5ex, text depth=0.25ex] 
{ E & f_\ast E & & & F \\
      & X & & Y \\};
\path[->] (m-2-2) edge node[above] {$f$} (m-2-4);
\path[->] (m-1-5) edge node[above] {$\lambda$} (m-1-2);
\path[dashed,->] (m-1-1) edge (m-2-2);
\path[dashed,->] (m-1-2) edge (m-2-4);
\path[dashed,->] (m-1-5) edge (m-2-4);
\end{tikzpicture}
\caption{A morphism in the category $\Sh(\LPS)$. \label{fig:sheafmorphism}}
\end{figure}
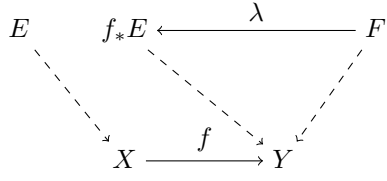

\subsection{Statement of the main theorem}
We are now ready to state our main theorem.
\begin{theorem}\label{th:main1}
The category $\DSL$ of left-handed strongly distributive skew lattices is dually equivalent to the category $\Sh(\LPS)$ of sheaves over local Priestley spaces.
\end{theorem}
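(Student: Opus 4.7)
The plan is to build contravariant functors $\Gamma\colon\Sh(\LPS)\to\DSL$ and $\Sp\colon\DSL\to\Sh(\LPS)$ and then produce natural isomorphisms $\mathrm{id}_{\DSL}\cong\Gamma\Sp$ and $\mathrm{id}_{\Sh(\LPS)}\cong\Sp\Gamma$. The construction of $\Gamma$ formalizes the partial-functions picture from the introduction, while $\Sp$ is built on top of classical Priestley duality applied to the lattice reflection $S/\D$.

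For $\Gamma$, given $(X,\tau,\leq,E)\in\Sh(\LPS)$, I would define $\Gamma(E)$ to be the set of pairs $(U,s)$ with $U$ a compact open downset of $X$ and $s\in E(U)$, equipped with $(U,s)\wedge(V,t):=(U\cap V,\,s|_{U\cap V})$ and $(U,s)\vee(V,t):=(U\cup V,\,s|_{U\setminus V}\vee t)$ via the sheaf patch, with zero $(\emptyset,\ast)$. The skew lattice axioms, left-handedness, and strong distributivity then follow from uniqueness of patching and elementary properties of $\cap,\cup$ on downsets. A morphism $(f,\lambda)\colon(X,E)\to(Y,F)$ acts by $(V,t)\mapsto(f^{-1}(V),\lambda_V(t))$, which is a proper skew lattice homomorphism because $\LPS$-morphisms pull compact opens back to compact opens.

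For $\Sp$, given $S\in\DSL$, the base space is $X:=\Sp(S/\D)$, the local Priestley dual of the distributive lattice reflection. The étalé space $E$ is built stalk by stalk: for each prime filter $p$ of $S/\D$ corresponding to $x\in X$, take $E_x$ to be the primitive left-handed skew lattice obtained from $\{s\in S:[s]_\D\in p\}$ modulo the congruence declaring $s\sim_p s'$ iff some witness $a^{\star}\in S$ with $[a^{\star}]_\D\in p$ satisfies $a^{\star}\wedge s = a^{\star}\wedge s'$. Each $s\in S$ then yields a map $\hat s\colon U_{[s]_\D}\to\bigsqcup_x E_x$, and the images $\hat s(U)$ (over clopen downsets $U$) form the basis for the étalé topology. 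A proper homomorphism $h\colon S\to T$ yields $\Sp(\bar h)\colon\Sp(T/\D)\to\Sp(S/\D)$ in $\LPS$, while $h$ itself provides the natural-transformation component by sending stalk classes to stalk classes.

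The equivalence is then exhibited by a unit $\eta_S\colon S\to\Gamma(\Sp(S))$ sending $s$ to the section $\hat s$ over $U_{[s]_\D}$, which should be bijective by Lemma~\ref{lem:useful}: item (ii) forces unique lifting of sections to elements of $S$, and item (i) underpins preservation of $\wedge$. The counit $\varepsilon_{(X,E)}\colon\Sp(\Gamma(X,E))\to(X,E)$ is produced by matching base spaces through classical Priestley duality applied to $\Gamma(X,E)/\D$, which is the distributive lattice of compact open downsets of $X$, and then matching stalks by a colimit argument together with Lemma~\ref{lem:basicdet} to compare natural transformations. I expect the main obstacle to be the rigorous intrinsic definition of the stalks and étalé topology starting from an abstract $S$, and the verification that the resulting bundle is genuinely étalé with surjective projection: the non-commutativity of $\vee$ and the interplay between $\D$, $\L$, $\R$ and strong distributivity mean that Lemma~\ref{lem:useful}(i)--(ii) will have to be applied repeatedly to show that local representatives cohere. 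Once the stalk/topology step is nailed down, the remaining bookkeeping around morphisms and naturality should be routine on top of Priestley duality.
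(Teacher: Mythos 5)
Your overall two-functor strategy is viable, and your $\Gamma$ coincides with the paper's functor $(-)^\star$; the gap is in the stalk construction for $\Sp$. First, with the left-handed convention in force here ($x\wedge y\wedge x=x\wedge y$; in the partial-map model $f\wedge g$ is $f$ restricted to $\dom f\cap\dom g$, taking the \emph{value of $f$}), the element $a^{\star}\wedge s$ carries the values of $a^{\star}$, not of $s$. Hence $a^{\star}\wedge s=a^{\star}\wedge s'$ holds as soon as $[a^{\star}]_\D\wedge[s]_\D=[a^{\star}]_\D\wedge[s']_\D$, and choosing $a^{\star}:=s\wedge s'$ (whose $\D$-class lies in $p$) shows your congruence identifies \emph{all} elements over $p$: every stalk collapses to a point and the bundle degenerates to $X$. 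Second, even after flipping the orientation to $s\wedge a^{\star}=s'\wedge a^{\star}$, a single witness with $[a^{\star}]_\D\in p$ only certifies that $s$ and $s'$ agree on a compact open \emph{downset} containing $x$; in a Priestley space the downsets through $x$ do not form a neighbourhood basis at $x$, and the equalizer $\|s=s'\|$ is merely open and need not contain any downset through $x$. So the relation identifies too little and the stalks come out too large. This is precisely why the paper's congruence $\sim_h$ uses a \emph{pair} of witnesses, $a\sim_h b$ iff there exist $c,d$ with $h(c)=0$, $h(d)=1$ and $(a\wedge d)\vee c=(b\wedge d)\vee c$: the sets $\hat{d}\cap\hat{c}^{\,c}$ do form a basis at $x$, and the ``$\vee\,c$'' masks out the region where the sections may disagree (Lemma~\ref{lem:simhxchar}).

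A second, independent gap is the claim that the unit $\eta_S$ is bijective ``by Lemma~\ref{lem:useful}.'' Surjectivity of $S\to\Gamma(\Sp(S))$ --- that every continuous section over a compact open downset equals $\hat{s}$ for some $s\in S$ --- is the technical heart of the whole proof: one covers the domain by finitely many basic sets $\hat{d_i}\cap\hat{c_i}^{\,c}$ on which the section agrees with some $s_{a_i}$, and must then \emph{algebraically assemble} a single element $a=\bigvee_i\bigl((a_i\wedge d_i)\vee f_i\bigr)$ by induction on the size of the cover (Lemma~\ref{lem:existence}); injectivity needs a parallel induction (Lemma~\ref{lem:uniqueness}). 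Lemma~\ref{lem:useful}(ii) is only one step inside those arguments. As written, your proposal contains no mechanism for patching local representatives into one element of $S$, which is exactly where the non-commutativity of $\vee$ has to be confronted.
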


The proof of this theorem will take up the rest of this paper.
 In Section~\ref{s:space_to_lattice}, we will define a left-handed strongly distributive skew lattice from a sheaf over a local Priestley space, and extend this assignment to a functor. We will then show how to retrieve the original sheaf from this in Section~\ref{s:new}. This will lead to the right way to associate a sheaf over a local Priestley space to a left-handed strongly distributive skew lattice in Section~\ref{s:lattice_to_space}. In Section~\ref{s:proof1} we will put all of this together to prove Theorem~\ref{th:main1}.

\section{From an \'{e}tal\'e space to a skew lattice}\label{s:space_to_lattice}

In this section, let $X=(X,\tau,\leq)$ be a local Priestley space and let $p : E \onto X$ be an \'etal\'e space over $X$. We denote the corresponding sheaf of local sections by $E$ as well. From these data, we will now construct a left-handed strongly distributive skew lattice $S$.

Let us denote by $L := \La(X)$ the distributive lattice of compact open downsets of $X$. We define the underlying set of $S$ to be $\bigsqcup_{U \in L} E(U)$, that is, the set of all local sections over all compact open downsets of $X$. We now define operations $\ucup$ and $\ocap$ on $S$ that will make it into a left-handed strongly distributive skew lattice.

Let $U, V \in L$ and $a\in E(U)$, $b\in E(V)$. We define the {\em override} $a\ucup b$ to be the local section
over $U \cup V$ given by
\begin{equation}\label{eq:rqu} (a\ucup b)(x) := \left\lbrace\begin{array}{ll}b(x), & \text{ if } x\in V,\\
a(x), & \text{ if } x\in U\setminus V.
\end{array}\right.
\end{equation}
Note that this indeed defines a continuous map $U \cup V \to E$, so $a \ucup b \in E(U \cup V)$. Viewing $E$ as a sheaf over $X$, note that $a \ucup b$ is the patch of the compatible family consisting of the two elements $a|_{U \setminus V}$ and $b|_V$, that is, $a \ucup b = a|_{U \setminus V} \vee b|_V$.

We define the {\em restriction} $a\ocap b$ to be the section in $E(U\cap V)$ given by
\begin{equation}\label{eq:lqi} (a\ocap b)(x) := a(x) \text{ for all } x\in U\cap V.
\end{equation}
Viewing $E$ as a sheaf, $a \ocap b$ is simply the restriction $a|_{U \cap V}$.

Finally, we let the \emph{zero element}, $0$, be the unique element of $E(\emptyset)$. Indeed, since $E$ is a sheaf and any element of $E(\emptyset)$ is a patch of the empty compatible family, $E(\emptyset)$ contains exactly one element.

In the following proposition, we collect some basic properties of the algebra $S$ that we constructed here.
\begin{proposition}\label{prop:basicprops}
Let $p : E \onto X$ be an \'etal\'e space over a local Priestley space, $L := \La(X)$ and $(S, \ocap, \ucup, 0)$ the algebra on $S = \bigsqcup_{U \in L} E(U)$ defined in \eqref{eq:rqu} and \eqref{eq:lqi}. Then the following hold.
\begin{enumerate}
\item The algebra $S$ is a left-handed strongly distributive skew lattice. 
\item The lattice reflection $S/\D$ of $S$ is isomorphic to $L$.
\item The order on $S$ is given by $a \leq b$ if and only if $a$ is a restriction of $b$.
\end{enumerate}
\end{proposition}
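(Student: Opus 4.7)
My plan is to verify the three claims in order, relying on direct case analysis for the algebraic identities, with most of the work reduced to observing how the operations act on domains and pointwise values.

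For (i), I first note the bookkeeping principle that governs every calculation: for $a \in E(U)$ and $b \in E(V)$, the domain of $a \ucup b$ is $U \cup V$ and the domain of $a \ocap b$ is $U \cap V$. I would start by observing idempotence ($a \ocap a = a|_U = a$, and $a \ucup a$ has domain $U$ with value $a(x)$ everywhere). Associativity of $\ocap$ is immediate since both triple products equal $a|_{U \cap V \cap W}$. For $\ucup$, both $(a \ucup b) \ucup c$ and $a \ucup (b \ucup c)$ have domain $U \cup V \cup W$ and by the definition in \eqref{eq:rqu} take value $c(x)$ on $W$, $b(x)$ on $V \setminus W$, and $a(x)$ on $U \setminus (V \cup W)$. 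The four absorption identities all reduce immediately to $a$: for example $a \ocap (a \ucup b)$ has domain $U \cap (U \cup V) = U$ and value $a(x)$ by \eqref{eq:lqi}, while $a \ucup (a \ocap b)$ has domain $U \cup (U \cap V) = U$ with value $a(x)$ everywhere (whether on $U \cap V$ via $(a \ocap b)(x) = a(x)$, or on $U \setminus V$ via the second clause of \eqref{eq:rqu}). Left-handedness is similar: $a \ocap b \ocap a$ has domain $U \cap V$ and constant value $a(x)$, agreeing with $a \ocap b$; and $a \ucup b \ucup a$ has domain $U \cup V$ with value $a(x)$ on $U$ and $b(x)$ on $V \setminus U$, matching $b \ucup a$. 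Finally the zero axiom is immediate since $\emptyset \cap U = \emptyset$.

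For strong distributivity I would verify both identities \eqref{eq:dist1} and \eqref{eq:dist2} directly by comparing domains and values. For \eqref{eq:dist1}: both sides have domain $U \cap (V \cup W)$, and the left side is constantly $a(x)$ there, while the right side takes value $a(x)$ on $U \cap W$ (from $a \ocap c$) and on $(U \cap V) \setminus (U \cap W)$ (from $a \ocap b$). For \eqref{eq:dist2}: both sides again have the same domain $U \cap (V \cup W)$, and both take value $c(x)$ on $U \cap W$ and value $b(x)$ on $U \cap V \setminus W$. Alternatively, once (ii) is in hand, one could invoke Proposition~\ref{prop:sdlchar} after separately verifying normality and symmetry, but the direct case analysis above is just as quick.

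For (ii), I claim that $a \mathrel{\D} b$ if and only if $\dom a = \dom b$. If the domains agree, then $a \ocap b \ocap a$ has domain $U$ and constant value $a(x)$, so equals $a$, and similarly for the other side. Conversely, if $a \ocap b \ocap a = a$, then comparing domains forces $U \cap V \cap U = U$, hence $U \subseteq V$; the symmetric equation yields $V \subseteq U$. So $\D$-classes are in bijection with $L$ via $[a]_\D \mapsto \dom a$. Since domains satisfy $\dom(a \ocap b) = \dom a \cap \dom b$ and $\dom(a \ucup b) = \dom a \cup \dom b$, this bijection is a lattice isomorphism onto $L = \La(X)$.

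For (iii), I simply unfold the definition: $a \leq b$ means $a \ocap b = a$ and $b \ocap a = a$. The first equation, on domains, forces $U \subseteq V$, and its pointwise content is automatic; the second equation, once $U \subseteq V$, requires $b(x) = a(x)$ for all $x \in U$, i.e.\ $a = b|_U$. The converse is immediate from \eqref{eq:lqi}. There is no real obstacle here—everything is a careful pointwise unfolding—the only care needed is to track simultaneously domains and values when comparing local sections.
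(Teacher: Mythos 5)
Your proof is correct, and for part (i) it takes a genuinely different (though elementary) route from the paper. The paper does not verify the identities by hand: it invokes the known fact (citing Leech) that the skew lattice $\mathcal{P}(X,E)$ of \emph{all} partial maps from $X$ to $E$ under override and restriction is a left-handed strongly distributive skew lattice, and then observes that $S$ is a subalgebra of $\mathcal{P}(X,E)$, so all the identities are inherited. Your direct case analysis on domains and pointwise values proves the same thing self-containedly, at the cost of length; the paper's argument is shorter but outsources the equational work and requires checking closure of $S$ under the operations (that $U\cup V$ and $U\cap V$ are again compact open downsets and that $a\ucup b$, $a\ocap b$ are again continuous sections --- the continuity of $a\ucup b$ uses that compact open subsets of a Hausdorff space are clopen, so $U\setminus V$ is open; the paper dispatches this in the construction preceding the proposition, and you implicitly assume it too, which is fine given how the statement is phrased). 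Your proofs of (ii) and (iii) coincide with the paper's: $\D$ is equality of domains, the domain map is a lattice isomorphism onto $L$ (note that surjectivity uses the standing assumption that $E(U)\neq\emptyset$ for all $U$, i.e.\ global support), and $\leq$ unfolds to restriction. No gaps.
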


\begin{proof} (i) It is known \cite{L6} and easy to check that the skew lattice ${\mathcal P}(X,E)$ of all partial maps from $X$ to $E$ is a left-handed strongly distributive skew lattice.  It is easy to verify that $S$ is a subalgebra of ${\mathcal P}(X,E)$, and therefore it is also a left-handed strongly distributive skew lattice.

(ii) Note that the relation $\D$ on $S$ is given by $a \, \D \, b$ if and only if $\dom(a) = \dom(b)$ (recall that the notation $\dom(a)$ denotes the domain of the function $a$). Hence, $S/\D$ is indeed isomorphic to the lattice of domains, $L$. 

(iii) By definition of $\leq$, we have $a\leq b$ if and only if $a\ocap b=a = b\ocap a$. The statement now follows from the definition of $\ocap$.\qedhere

\end{proof}

Let us call the left-handed strongly distributive skew lattice $S$ the {\em dual algebra} of the \'{e}tal\'e space $p : E \onto X$. We will sometimes denote $S$ by $E^{\star}$ or $(E,p,X)^{\star}$, to emphasize that it is constructed from the \'etal\'e space $(E,p,X)$. We now use the above construction to define a contravariant functor from $\Sh(\LPS)$ to $\DSL$, which will be one of the equivalence functors of the duality in Theorem~\ref{th:main1}.

Let $E$ and $F$ be sheaves over local Priestley spaces $(X,\tau,\leq)$ and $(Y,\tau,\leq)$, respectively. The naturally associated \'etal\'e spaces $E \onto X$ and $F \onto Y$ yield dual algebras $E^\star$ and $F^\star$. Suppose $(f,\lambda)$ is a morphism from $E$ to $F$, as in Figure~\ref{fig:sheafmorphism} in \ref{subsec:sheaves}. We will define a skew lattice morphism $(f,\lambda)^\star : F^\star \to E^\star$.

Let $a \in F^\star$, so $a \in F(U)$ for some $U \in \La(Y)$. By classical Priestley duality, we have $f^{-1}(U) \in \La(X)$. We now define 
\[(f,\lambda)^\star(a) := \lambda_U(a),\]
which is an element of $f_\ast E(U) = E(f^{-1}(U)) \subseteq E^\star$.

\begin{lemma}\label{lem:dslmorph}
The function $(f,\lambda)^\star : F^\star \to E^\star$ is a morphism in $\DSL$ for which $\overline{(f,\lambda)^\star} = \La(f)$.
\end{lemma}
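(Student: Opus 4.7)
The plan is to verify that $(f,\lambda)^\star$ preserves $0$, $\ocap$, and $\ucup$, and then to compute its $\D$-reflection and conclude properness from Priestley duality.

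First, for any $a \in F(U)$ with $U \in \La(Y)$, one has $(f,\lambda)^\star(a) = \lambda_U(a) \in E(f^{-1}(U))$, and $f^{-1}(U) \in \La(X)$ since $f$ is a morphism in $\LPS$. Hence $(f,\lambda)^\star$ really does land in $E^\star$, and it sends the unique element of $F(\emptyset)$ to the unique element of $E(\emptyset)$, so $0$ is preserved. For $\ocap$, given $a \in F(U)$ and $b \in F(V)$, naturality of $\lambda$ with respect to the inclusion $U \cap V \hookrightarrow U$ yields $\lambda_{U \cap V}(a|_{U \cap V}) = \lambda_U(a)|_{f^{-1}(U \cap V)}$, and by \eqref{eq:lqi} the left-hand side is $(f,\lambda)^\star(a \ocap b)$ while the right-hand side is $(f,\lambda)^\star(a) \ocap (f,\lambda)^\star(b)$.

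For $\ucup$, I would use the patch description of the override. Since $V$ is clopen, the set $W := U \setminus V$ is open in $Y$, and by \eqref{eq:rqu} the section $a \ucup b$ is the unique patch in $F$ of the compatible pair $a|_W \in F(W)$ and $b \in F(V)$ over the disjoint cover $\{W, V\}$ of $U \cup V$. Any natural transformation between sheaves commutes with patching, so $\lambda_{U \cup V}(a \ucup b)$ is the patch in $E$ of $\lambda_W(a|_W)$ and $\lambda_V(b)$. Applying naturality once more to the inclusion $W \hookrightarrow U$ gives $\lambda_W(a|_W) = \lambda_U(a)|_{f^{-1}(W)}$; and since $f^{-1}(W) = f^{-1}(U) \setminus f^{-1}(V)$, this patch is precisely the element $\lambda_U(a) \ucup \lambda_V(b)$ in $E^\star$.

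Finally, to identify $\overline{(f,\lambda)^\star}$ with $\La(f)$: by Proposition~\ref{prop:basicprops}(ii), $F^\star/\D$ and $E^\star/\D$ are identified with $\La(Y)$ and $\La(X)$ respectively via the domain map. The calculation $\dom((f,\lambda)^\star(a)) = f^{-1}(\dom(a))$ then shows that $\overline{(f,\lambda)^\star}$ coincides with $\La(f)$ under these identifications. Since $f$ is an $\LPS$-morphism, its Priestley dual $\La(f)$ is a proper $\Dist_0$-homomorphism, so $(f,\lambda)^\star$ is indeed a morphism in $\DSL$. The only subtle point is making the naturality argument for $\ucup$ work on the non-downset (but still open) set $W$; this is not a real obstacle because $\lambda$ is natural with respect to every inclusion in $\Omega(Y)$, not only those between clopen downsets.
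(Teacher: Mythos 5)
Your proof is correct and follows essentially the same route as the paper: naturality of $\lambda$ for $\ocap$, and identification of $\overline{(f,\lambda)^\star}$ with $\La(f) = f^{-1}$ via the domain map from Proposition~\ref{prop:basicprops}(ii). The only difference is that you spell out the $\ucup$ case (via the patch description and the fact that natural transformations between sheaves commute with patching), which the paper leaves to the reader as ``similar''; your version correctly supplies the needed observation that $U\setminus V$ is open because compact open downsets are clopen.
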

\begin{proof}
Let us write $h$ for the function $(f,\lambda)^\star$. We show in detail that $h$ preserves the operation $\ocap$, and leave it to the reader to verify that $h$ preserves $\ucup$ and $0$, since the proofs are similar. Let $a \in F(U)$, $b \in F(V)$. By definition of $\ocap$, we have $h(a) \ocap h(b) = h(a)|_{f^{-1}(U) \cap f^{-1}(V)}$. By naturality of $\lambda$, the following diagram commutes:
\begin{center}
\begin{tikzpicture}
\matrix (m) [matrix of math nodes, row sep=3.5em, column sep=2em, text height=1.5ex, text depth=0.25ex] 
{ F(U) & & f_\ast E(U) \\
F(U \cap V) & & f_\ast E(U \cap V) \\};
\path[->] (m-1-1) edge node[above] {$\lambda_U$} (m-1-3);
\path[->] (m-2-1) edge node[below] {$\lambda_{U \cap V}$} (m-2-3);

\path[->] (m-1-1) edge node[left] {$(-)|_{U \cap V}$} (m-2-1);
\path[->] (m-1-3) edge node[right] {$(-)|_{f^{-1}(U \cap V)}$} (m-2-3);
\end{tikzpicture}
\end{center}
In particular, we get 
\begin{align*}
h(a \ocap b) = \lambda_{U \cap V}(a \ocap b) = \lambda_{U \cap V}(a|_{U \cap V}) &= \lambda_U(a)|_{f^{-1}(U \cap V)} \\
&= h(a)|_{f^{-1}(U) \cap f^{-1}(V)} = h(a) \ocap h(b).
\end{align*}

Further note that $\bar{h} : F^\star\!/\D \to E^\star\!/\D$ is exactly the proper homomorphism $f^{-1} = \La(f)$ dual to $f$ in classical Priestley duality. Hence, $h$ is a morphism in $\DSL$ and $\bar{h} = \La(f)$.
\end{proof}
In conclusion, we can record the following proposition.
\begin{proposition}
The assignments $(E,p,X) \mapsto (E,p,X)^\star$ and $(f,\tau) \mapsto (f,\tau)^\star$ define a contravariant functor $(-)^\star$ from $\Sh(\LPS)$ to $\DSL$.
\end{proposition}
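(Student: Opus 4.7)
The plan is to verify the two functoriality axioms (preservation of identities and contravariant preservation of composition), since well-definedness of $(-)^\star$ on objects is already guaranteed by Proposition~\ref{prop:basicprops} and well-definedness on morphisms by Lemma~\ref{lem:dslmorph}.

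For the identity axiom, the identity morphism on an object $(X,\tau,\leq,E)$ in $\Sh(\LPS)$ is the pair $(\mathrm{id}_X, \iota)$, where $\iota : E \Rightarrow (\mathrm{id}_X)_\ast E = E$ is the identity natural transformation, so $\iota_U$ is the identity on $E(U)$ for every $U \in \Omega(X)$. Unwinding the definition of $(-)^\star$ on morphisms, $(\mathrm{id}_X,\iota)^\star$ sends each $a \in E(U)$ (with $U \in \La(X)$) to $\iota_U(a) = a$, so it is the identity on $E^\star$.

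For contravariant preservation of composition, consider composable morphisms $(f,\lambda) : E \to F$ and $(g,\mu) : F \to G$ in $\Sh(\LPS)$, with $f : X \to Y$ and $g : Y \to Z$. Their composite, as defined in Subsection~\ref{subsec:sheaves}, is $(gf,\sigma)$ where $\sigma_U = \lambda_{g^{-1}(U)} \circ \mu_U$ for $U \in \Omega(Z)$. Take any $a \in G(U)$ with $U \in \La(Z)$. By classical Priestley duality, $g^{-1}(U) \in \La(Y)$, so $\mu_U(a) \in F(g^{-1}(U))$ is indeed an element of $F^\star$. Unfolding the definitions,
\[ (gf,\sigma)^\star(a) = \sigma_U(a) = \lambda_{g^{-1}(U)}(\mu_U(a)), \]
while
\[ \bigl((f,\lambda)^\star \circ (g,\mu)^\star\bigr)(a) = (f,\lambda)^\star\bigl(\mu_U(a)\bigr) = \lambda_{g^{-1}(U)}(\mu_U(a)). \]
Hence $(gf,\sigma)^\star = (f,\lambda)^\star \circ (g,\mu)^\star$, as required for contravariance.

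There is no real obstacle here: once the definition of composition of sheaf morphisms and the definition of $(-)^\star$ on morphisms are unfolded, the required equalities are immediate. The only point that needs a little care is the direction-reversal — the natural transformation $\lambda$ runs from $F$ to $f_\ast E$ in the same direction as $f$ on base spaces, but the associated skew lattice map goes in the opposite direction because each component $\lambda_U$ maps $F(U)$ into $E(f^{-1}(U))$, which is why $(-)^\star$ is contravariant.
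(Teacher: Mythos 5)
Your proposal is correct and follows the same route as the paper: well-definedness on objects and morphisms is delegated to Proposition~\ref{prop:basicprops}(i) and Lemma~\ref{lem:dslmorph}, and the paper simply leaves the functoriality check to the reader, which is exactly the routine verification you carry out. Your unwinding of the identity and composition axioms matches the definitions in Subsection~\ref{subsec:sheaves} and is accurate.
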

\begin{proof}
By Proposition~\ref{prop:basicprops}(i) and Lemma~\ref{lem:dslmorph}, the assignments are well-defined. We leave functoriality to the reader.
\end{proof}

\section{Reconstructing an \'etal\'e space from its dual algebra}\label{s:new}
In this section, we show how a sheaf $E$ over a local Priestley space $X$ can be reconstructed (up to homeomorphism) from its dual algebra $E^\star$, defined in the previous section. This will be the main motivation for the construction leading to the definition of a contravariant functor $(-)_\star : \DSL \to \Sh(\LPS)$ in the next section.

In the remainder of this section, let $E$ be a sheaf over a local Priestley space $X$, and let $p : E \onto X$ be the \'etal\'e space associated to the sheaf. Let $E^\star$ be the dual algebra of $E$, $L := E^{\star}\!/\D$ its lattice reflection and $\alpha: E^\star \to L$ the natural quotient map.

\subsection{Reconstructing the base space}\label{subsec:basespace}
We first note that we can reconstruct the base space $X$ from the left-handed strongly distributive skew lattice $E^\star$. By Proposition~\ref{prop:basicprops}(i), $L$ is isomorphic to $\La(X)$. Hence, $X$ is homeomorphic to the space $\Sp(L)$, by classical Priestley duality.
A point of $\Sp(L)$ can be concretely given by a morphism $L \to \2$ in $\Dist_0$, by Remark~\ref{rem:natural2}. By Theorem~\ref{thm:latticereflection}, the hom-set $\Dist_0(L,\2) = \Dist_0(E^\star\!/\D,\2)$ is naturally isomorphic to the hom-set $\DSL(E^\star,\2)$, because $\2$ is a lattice. In summary, we obtain
\begin{equation}\label{eq:homeo}
X \cong \Sp(L) \cong \Dist_0(L,\2) \cong \DSL(E^\star,\2),
\end{equation}
where the topology on $\DSL(E^\star,\2)$ is given by taking as a basis the sets of the form $\{h \in \DSL(E^\star,\2) : h(a) = 1\}$ and their complements, where $a$ ranges over $E^\star$.

\subsection{Reconstructing the stalks}
We will now reconstruct, for any $x \in X$, the stalk $E_x$ above it. Fix $x \in X$. Let $P_x$ be the primitive skew lattice whose non-zero $\D$-class is the set $E_x$. Then we have a natural evaluation homomorphism $\ev_x : E^\star \to P_x$, defined by
\[ \ev_x(a) := \left\{ \begin{array}{cc} a(x) &\text{ if $x \in \dom(a)$} \\
							0   &\text{ otherwise}.
				\end{array}\right.
				\]
Note that the composition $\alpha \circ \ev_x : E^\star \to \2$ is exactly the map $h_x$ naturally associated to $x$ in (\ref{eq:homeo}): it sends $a \in E^\star$ to $1$ iff $x \in \dom(a)$. We can now characterize the kernel of the homomorphism $\ev_x$ by an algebraic property which only refers to $\wedge$, $\vee$, $0$ and the map $h_x$, as follows.

\begin{lemma}\label{lem:simhxchar}
Let $x \in X$. For any $a, b \in E^\star$, the following are equivalent:
\begin{enumerate}
\item $\ev_x(a) = \ev_x(b)$;
\item there exist $c, d \in E^\star$ such that $h_x(c) = 0$, $h_x(d) = 1$, and $(a \wedge d) \vee c = (b \wedge d) \vee c$.
\end{enumerate}
\end{lemma}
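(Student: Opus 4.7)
For (ii) $\Rightarrow$ (i), I would observe that $\ev_x$ is a homomorphism from $E^\star$ to the primitive skew lattice $P_x$, and that the already-noted identity $h_x = \alpha \circ \ev_x$ implies that $h_x(s) = 0$ iff $\ev_x(s) = 0$, and $h_x(s) = 1$ iff $\ev_x(s) \neq 0$. Applying $\ev_x$ to the equation $(a \wedge d) \vee c = (b \wedge d) \vee c$ yields $\ev_x(a \wedge d) = \ev_x(b \wedge d)$, since $\ev_x(c) = 0$. In the primitive skew lattice $P_x$, any two non-zero elements $s,t$ satisfy $s \wedge t = s$ by left-handedness; combined with $\ev_x(d) \neq 0$, this gives $\ev_x(a \wedge d) = \ev_x(a)$ (whether or not $\ev_x(a)$ is zero), and similarly $\ev_x(b \wedge d) = \ev_x(b)$. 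Hence $\ev_x(a) = \ev_x(b)$.

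For (i) $\Rightarrow$ (ii), I would assume $\ev_x(a) = \ev_x(b)$ and construct suitable $c, d$. If $x \in \dom(a) \cap \dom(b)$ then $a(x) = b(x)$, and the \'etal\'e property yields an open neighborhood $W \ni x$ with $W \subseteq \dom(a) \cap \dom(b)$ on which $a$ and $b$ agree as sections; otherwise neither domain contains $x$, and I would set $W := \emptyset$. Let $F := (\dom(a) \cup \dom(b)) \setminus W$; this set is closed in the compact downset $\dom(a) \cup \dom(b)$, hence compact, and does not contain $x$.

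The crux is the topological construction of a compact open downset $D$ containing $x$ with $D \cap F \cap {\uparrow}x = \emptyset$, together with a compact open downset $C$ not containing $x$ such that $C \supseteq F \cap D$. To build $D$, I would work in the one-point compactification $\hat X$: the set $(F \cap {\uparrow}x) \cup \{\ast\}$ is compact in $\hat X$, and each of its points $y$ satisfies $y \not\leq x$; by total order-disconnectedness, each such $y$ is separated from $x$ by a clopen downset of $\hat X$ containing $x$ but not $y$, and a finite subcover produces $D$ as a clopen downset of $\hat X$ missing $\ast$, hence a compact open downset of $X$. Dually, each $y \in F \cap D$ satisfies $x \not\leq y$ (because $D$ is a downset disjoint from $F \cap {\uparrow}x$ and $y \neq x$), so total order-disconnectedness gives a clopen downset of $\hat X$ containing $y$ but not $x$, which can be further intersected with a clopen downset of $\hat X$ separating $y$ from $\ast$; a finite subcover of the compact set $F \cap D$ then produces $C$. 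I would then choose $d \in E(D)$ and $c \in E(C)$ by the global support assumption, and verify $(a \wedge d) \vee c = (b \wedge d) \vee c$ by checking that any $y$ in the symmetric difference of $\dom(a) \cap D$ and $\dom(b) \cap D$, or in their intersection where $a \neq b$, must lie in $F \cap D \subseteq C$; hence the two overrides agree both on $C$ (where both equal $c$) and on the complementary part of $D$ (where the remaining points lie in $W$ and therefore $a = b$).

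The main obstacle will be the asymmetric compact open downset construction of $D$ and $C$: in a local Priestley space one cannot generally find a compact open downset \emph{neighborhood} of $x$, so $D$ must be used to exclude the part of $F$ lying strictly above $x$, while $C$ covers the remainder of $F$; both steps rely on total order-disconnectedness of $\hat X$ together with the compactness of the relevant closed subsets.
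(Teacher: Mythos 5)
Your proof is correct and follows essentially the same route as the paper's: (ii)$\Rightarrow$(i) by evaluating at $x$ (the paper does this pointwise rather than via the homomorphism property of $\ev_x$, but the content is identical), and (i)$\Rightarrow$(ii) by using openness of the equalizer of $a$ and $b$, producing compact open downsets $D \ni x$ and $C \not\ni x$ with $D \cap C^c$ contained in the equalizer (up to the domains), and choosing $c,d$ over them by global support. The only differences are presentational: the paper splits (i)$\Rightarrow$(ii) into the cases $\ev_x(a)=0$ (where it simply takes $c := a \vee b$ and any $d$ with $h_x(d)=1$) and $\ev_x(a)\neq 0$, and it invokes the existence of basic neighbourhoods of the form $U \cap V^c$ as a standard feature of local Priestley spaces, whereas you re-derive this from total order-disconnectedness of $\hat{X}$.
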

\begin{proof}
(i) $\Rightarrow$ (ii). If $\ev_x(a) = 0$, then, since $h_x$ is proper, we can pick some $d$ with $h_x(d) = 1$. Put $c := a \vee b$. Note that $h_x(c) = 0$ since $x \not\in \dom(c) = \dom(a) \cup \dom(b)$. Now $(a \wedge d) \vee c$ and $(b \wedge d) \vee c$ are both equal to $a \vee b$, as required.

If $\ev_x(a) \neq 0$, then $x \in \dom(a) \cap \dom(b)$. Since $a$ and $b$ are continuous sections, their equalizer $\| a = b \| = \{x \in \dom(a) \cap \dom(b) \ | \ a(x) = b(x)\}$ is open in $X$, and it contains $x$, so there exist compact open downsets $U$ and $V$ of $X$ such that $x \in U \cap V^c \subseteq \| a = b \|$. Pick some $d \in E(U)$ and $c \in E(V)$. Then $x \not\in \dom(c) = V$, so $h_x(c) = 0$, and $x \in \dom(d) = U$, so $h_x(d) = 1$. It is clear from the definitions of $\wedge$ and $\vee$ that $\dom((a \wedge d) \vee c) = U \cup V = \dom((b \wedge d) \vee c)$, and that the values of $(a \wedge d) \vee c$ and $(b \wedge d) \vee c$ are equal, since $a$ and $b$ are equal on $U \cap V^c$ by construction.

(ii) $\Rightarrow$ (i). Note that (ii) implies $h_x(a) = h_x(b)$, since $h_x$ is a homomorphism. Hence, we have either $x \not\in \dom(a)$ and $x \not\in \dom(b)$, or $x \in \dom(a)$ and $x \in \dom(b)$. In the first case, (i) clearly holds and we are done. If $x \in \dom(a) \cap \dom(b)$, we show that $a(x) = b(x)$. Pick $c, d \in E^\star$ such that $h_x(c) = 0$, $h_x(d) = 1$ and $(a \wedge d) \vee c = (b \wedge d) \vee c$. Since $x \not\in \dom(c)$ and $x \in \dom(d)$, we get from the definitions of $\wedge$ and $\vee$ that $((a \wedge d) \vee c)(x) = a(x)$ and $((b \wedge d) \vee c)(x) = b(x)$, so $a(x) = b(x)$, as required.
\end{proof}
Hence, given a point $x \in X$, we define a relation $\sim_x$ on $E^\star$ by
\begin{eqnarray*}
a \sim_{x} b \iff \exists c, d \in S : h_x(c) = 0, h_x(d) = 1, \text{ and }(a \wedge d) \vee c = (b \wedge d) \vee c,
\end{eqnarray*}
and we immediately obtain:
\begin{proposition}\label{prop:stalkfromquotient}
Let $x \in X$. The relation $\sim_{x}$ is a skew lattice congruence on $E^\star$, and there is an isomorphism between $E^\star\!/\!\!\sim_{x}$ and $P_x$, which takes the quotient map $E^\star \onto E^\star\!/\!\!\sim_x$ to the evaluation map $\ev_x : E^\star \onto P_x$.
\end{proposition}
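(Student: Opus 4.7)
My plan is to deduce the proposition from Lemma~\ref{lem:simhxchar} by showing that the evaluation map $\ev_x : E^\star \to P_x$ is a surjective homomorphism of skew lattices. Indeed, Lemma~\ref{lem:simhxchar} identifies $\sim_x$ with the kernel equivalence of $\ev_x$, so once $\ev_x$ is a surjective homomorphism, $\sim_x$ is automatically a congruence (being the kernel of a homomorphism), and the standard universal-algebraic factorization yields the required isomorphism $E^\star/\!\!\sim_x \,\cong\, P_x$ through which the quotient map factors as $\ev_x$.

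First, I would verify that $\ev_x$ preserves $0$, $\ocap$, and $\ucup$. Preservation of $0$ is immediate, since $x \notin \dom(0) = \emptyset$. For $\ocap$, given $a \in E(U)$ and $b \in E(V)$, formula~\eqref{eq:lqi} shows that $(a \ocap b)(x) = a(x)$ when $x \in U \cap V$ and is undefined otherwise; a case split on whether $x$ belongs to neither, one, or both of $U, V$ matches this against the left-handed rule $t \ocap t' = t$ in $P_x$. A symmetric case analysis on formula~\eqref{eq:rqu}, using the override rule $t \ucup t' = t'$ in $P_x$, handles $\ucup$.

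The main step, and principal obstacle, is surjectivity of $\ev_x$: for any $t \in E_x$, I must produce a local section $s$ over a compact open downset $U \ni x$ with $s(x) = t$. By the étale property, there is an open set $V \subseteq E$ containing $t$ such that $p|_V$ is a homeomorphism onto an open $W := p(V) \ni x$. Applying the Priestley separation axiom to $x < \ast$ in the ordered one-point-compactification $\hat{X}$ yields a compact open downset $U$ of $X$ containing $x$. Moreover, $(X, \tau)$ is zero-dimensional, since its basic opens are Boolean combinations of compact open downsets and their complements and hence clopen, so one can find a clopen subset $C \subseteq X$ with $x \in C \subseteq W$. Writing $C' := C \cap U$, a clopen subset of $U$, I glue two pieces: on $C'$, take $s := (p|_V)^{-1}|_{C'}$, well-defined since $C' \subseteq W$; on the open complement $U \setminus C'$, take any section supplied by the global-support assumption $E(U \setminus C') \neq \emptyset$. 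The partition of $U$ into the clopens $C'$ and $U \setminus C'$ forces these pieces to glue to a continuous section $s \in E(U)$ with $s(x) = (p|_V)^{-1}(x) = t$, so $\ev_x(s) = t$. Both the zero-dimensionality of local Priestley spaces and the global-support hypothesis on $E$ are essential here, as the natural candidate $(p|_V)^{-1}$ lives over $W$ rather than over a compact open downset.
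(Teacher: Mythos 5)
Your proposal is correct and follows essentially the same route as the paper: Lemma~\ref{lem:simhxchar} identifies $\sim_x$ with the kernel of the evaluation homomorphism $\ev_x$, and the first isomorphism theorem of universal algebra then gives the stated isomorphism. The only difference is one of detail: you explicitly verify that $\ev_x$ is a surjective homomorphism (the gluing argument producing a section through a given germ over a compact open downset), points the paper takes as read when it introduces $\ev_x$ and which it only touches on later, in the proof of the subsequent corollary.
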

\begin{proof}
The preceding lemma exactly shows that $\sim_{x}$ is the kernel of the morphism $\ev_x$. The result now follows from the first isomorphism theorem of universal algebra.
\end{proof}

\subsection{Reconstructing the \'etal\'e space}
For a primitive skew lattice $P$, we denote by $P^1$ the unique non-zero $\D$-class of $P$, considered as a set.
\begin{corollary}
The \'etal\'e space $p : E \onto X$ is isomorphic to $q : (E^\star)_\star \onto X$, where
\begin{itemize}
\item the set underlying the space $(E^\star)_\star$ is 
\[\bigsqcup_{x \in X} (E^\star\!/\!\!\sim_{x})^1 = \{(x,[a]_{\sim_x}) : x \in X, [a]_{\sim_x} \in (E^\star\!/\!\!\sim_x)^1\},\]
\item the function $q : (E^\star)_\star \onto X$ sends an element of the disjoint union to its index $x \in X$,
\item the topology on $(E^\star)_\star$ is given by taking as a basis of open sets the sets of the form
\[ \hat{a} := \{(x, [a]_{\sim_x}) \ | \ x \in \dom(a)\},\]
where $a$ ranges over the elements of $E^\star$.
\end{itemize}
\end{corollary}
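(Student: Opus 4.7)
The plan is to exhibit an explicit homeomorphism $\Phi : (E^\star)_\star \to E$ commuting with the projections $q$ and $p$, defined on points by $\Phi(x, [a]_{\sim_x}) := a(x)$. By Proposition~\ref{prop:stalkfromquotient}, the quotient $E^\star\!/\!\!\sim_x$ is identified with $P_x$ via $\ev_x$, so each non-zero class $[a]_{\sim_x}$ corresponds uniquely to the point $a(x) \in E_x$; this shows that $\Phi$ is well-defined, restricts to an injection on each fibre, and satisfies $p \circ \Phi = q$ by construction.

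To establish surjectivity onto $E_x$, fix $e \in E_x$. Using the local homeomorphism property of $p$, one obtains an open neighbourhood $W$ of $e$ in $E$ such that $p|_W$ is a homeomorphism onto the open set $p(W) \subseteq X$; the inverse of this homeomorphism yields a local section $s : p(W) \to E$ with $s(x) = e$. Appealing to the local Priestley structure of $X$, one then locates a compact open downset $U$ with $x \in U \subseteq p(W)$, and the restriction $a := s|_U$ lies in $E^\star$ and satisfies $\Phi(x, [a]_{\sim_x}) = e$, as required.

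Finally, to see that $\Phi$ is a homeomorphism, observe that $\Phi$ carries each basic open $\hat{a}$ of $(E^\star)_\star$ to the image of the section $a \in E(\dom(a))$ inside $E$, which is itself a basic open of the étalé topology on $E$; this gives openness of $\Phi$. Conversely, each basic open of $E$ of the form $\hat{s}(V)$, for $s \in E(V)$ over an arbitrary open set $V$, can be written as the union of the images of the restrictions of $s$ to the compact open downsets contained in $V$ that are obtained as in the surjectivity argument, showing that $\Phi^{-1}$ is also open.

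The main technical obstacle is ensuring, at the two points where it is used---in the surjectivity argument and in the continuity argument---that every open neighbourhood $V \ni x$ in $X$ contains a compact open downset $U$ with $x \in U \subseteq V$ that is adequate for representing the sections of the étalé space locally. This cofinality property of compact open downsets is the essential topological content of the local Priestley hypothesis that makes the reconstruction work, and must be derived from the definition of local Priestley space together with the compatibility between the Priestley order and the étalé structure of $p$.
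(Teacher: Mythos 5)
Your overall strategy is the paper's own (the paper defines the inverse map $\psi : E \to (E^\star)_\star$, $e \mapsto (x,[a]_{\sim_x})$, and quotes Proposition~\ref{prop:stalkfromquotient} for bijectivity), but your execution rests on a topological claim that is false. You assert, and use twice, that every open neighbourhood $V$ of a point $x$ in the local Priestley space $X$ contains a compact open downset $U$ with $x \in U \subseteq V$. The compact open downsets are a basis for the topology $\Down$ of open downsets, not for $\tau$: a basis for $\tau$ consists of sets of the form $\hat{d} \cap \hat{c}^{\,c}$ with $\hat{c} \subseteq \hat{d}$ compact open downsets. Already in the two-point Priestley space $\{0,1\}$ with $0 < 1$ and the discrete topology, the open set $\{1\}$ contains no compact open downset containing $1$ (the only such downset is $\{0,1\}$). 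So in your surjectivity argument the restriction $a := s|_U$ need not exist as an element of $E^\star$, and in your continuity argument the images of restrictions of $s$ to compact open downsets contained in $V$ need not cover $\hat{s}(V)$. The closing paragraph, where you declare this cofinality property to be ``the essential topological content of the local Priestley hypothesis,'' is exactly where the proof breaks: the property cannot be derived because it does not hold.

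The repair uses the sheaf structure rather than pure topology. Given $e = \germ_x s$ with $s \in E(V)$, choose a basic open $\hat{d} \cap \hat{c}^{\,c}$ with $x \in \hat{d} \cap \hat{c}^{\,c} \subseteq V$. Since $\hat{d} = (\hat{d} \cap \hat{c}^{\,c}) \cup \hat{c}$ and the two pieces are disjoint, the pair consisting of $s|_{\hat{d} \cap \hat{c}^{\,c}}$ and an arbitrary element of $E(\hat{c})$ (nonempty by the standing global-support assumption) is automatically compatible, so it patches to some $a \in E(\hat{d}) \subseteq E^\star$ with $\germ_x a = \germ_x s = e$. This gives surjectivity, and the same device expresses $\Phi^{-1}(\hat{s}(V))$ as a union of sets $\hat{a} \cap q^{-1}(\hat{d} \cap \hat{c}^{\,c})$, which are open in $(E^\star)_\star$, giving continuity. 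With these substitutions your argument goes through and coincides in substance with the paper's.
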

\begin{proof}
Define a map $\psi : E \to (E^\star)_\star$ by sending $e \in E_x$ to $(x, [a]_{\sim_{x}})$, where $a$ is any local section for which $a(x) = e$; such a section exists because $p$ is an \'etale map, and the value of $\psi(e)$ does not depend on the choice of $a$ because of Lemma~\ref{lem:simhxchar}. By Proposition~\ref{prop:stalkfromquotient}, $\psi$ is a bijection. It is not hard to see from the definition of the topologies on $E$ and $(E^\star)_\star$ that $\psi$ is open and continuous. Hence, $\psi$ is a homeomorphism, which clearly commutes with the \'etale maps.
\end{proof}

\section{From a left-handed strongly distributive skew lattice to an \'etal\'e space}\label{s:lattice_to_space}
In this section, we generalize the construction from the previous section to an \emph{arbitrary} left-handed strongly distributive skew lattice $S$. This is the main contribution of this paper, and it is the key to the proof that the functor $(-)^\star$ defined in Section~\ref{s:space_to_lattice} is part of a contravariant equivalence of categories.

Let $S$ be a left-handed strongly distributive skew lattice. We will define an \'etal\'e space $q : S_\star \to X$ over a local Priestley space. 

\subsection{The base space $X$}
Recall from Proposition~\ref{prop:sdlchar} that $S/\D$ is a distributive lattice with $0$. By Remark~\ref{rem:natural2} and Theorem~\ref{thm:latticereflection}, the set underlying the local Priestley space $\Sp(S/\D)$ is in a bijective correspondence with the set $\DSL(S,\2)$. A topology on $\DSL(S,\2)$ is given by taking as a basis the sets of the form $\hat{a} = \{h : S \to \2 \ | \ h(a) = 1\}$ and their complements, where $a$ ranges over $S$. With this topology, $\DSL(S,\2)$ is homeomorphic to the local Priestley space $\Sp(S/\D)$. We will denote this space by $X$, and we will define an \'etal\'e space over $X$.

\subsection{A maximal primitive quotient}
Inspired by the results in the previous section, for $h \in X = \DSL(S,\2)$, we define the relation $\sim_h$ as follows:
\begin{eqnarray}\label{eq:defsimh}
a \sim_h b \iff &\exists c, d \in S : h(c) = 0, h(d) = 1, \text{ and } (a \wedge d) \vee c = (b \wedge d) \vee c.
\end{eqnarray}

The following proposition is now the central technical result that we need to construct $S_\star$.
\begin{proposition}
Let $S$ be a left-handed strongly distributive skew lattice, and $h \in \DSL(S,\2)$. The following properties hold:
\begin{enumerate}
\item The relation $\sim_h$ is a skew lattice congruence on $S$ which refines $\ker(h)$.
\item The quotient skew lattice $S/\!\!\sim_h$ is primitive and the diagram
\begin{center}
\begin{tikzpicture}
\matrix (m) [matrix of math nodes, row sep=3.5em, column sep=2em, text height=1.5ex, text depth=0.25ex] 
{  S & & \\
      S/\!\!\sim_h & & \2 \\};
\path[->>] (m-1-1) edge node[left] {$\pi$} (m-2-1);
\path[->] (m-2-1) edge node[above] {$\alpha$} (m-2-3);
\path[->] (m-1-1) edge node[above] {$h$} (m-2-3);
\end{tikzpicture}
\end{center}
commutes.
\item For any commuting diagram in $\DSL$ of the form
\begin{center}
\begin{tikzpicture}
\matrix (m) [matrix of math nodes, row sep=3.5em, column sep=2em, text height=1.5ex, text depth=0.25ex] 
{  S & & \\
      P' & & \2 \\};
\path[->>] (m-1-1) edge node[left] {$\pi'$} (m-2-1);
\path[->] (m-2-1) edge node[above] {$\alpha$} (m-2-3);
\path[->] (m-1-1) edge node[above] {$h$} (m-2-3);
\end{tikzpicture}
\end{center}
where $P'$ is primitive, there is a unique factorization $t : S/\!\!\sim_h \to P'$ such that $t \circ \pi = \pi'$.
\end{enumerate}
\end{proposition}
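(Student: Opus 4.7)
The relation $\sim_h$ is tailor-made to reconstruct primitive quotients, so the approach is to verify (i) by direct manipulation of the skew-lattice identities, after which (ii) and (iii) become largely formal. I begin with part (i). Reflexivity follows by taking $c = 0$ and any $d$ with $h(d)=1$ (which exists since $h$ is proper in $\DSL$), and symmetry is immediate from the symmetry of the defining equation. For transitivity, given witnesses $(c_i, d_i)$ for $a \sim_h b$ and $b \sim_h e$, I set $d := d_1 \wedge d_2$ and $c := c_1 \vee c_2$. Joining each hypothesis with $c$ on the right (using $c_1 \vee c = c$ by idempotency, and $c_2 \vee c = c_2 \vee c_1 \vee c_2 = c_1 \vee c_2 = c$ by the left-handed identity $x \vee y \vee x = y \vee x$) yields two equations with common right summand $c$ but meet-parts $d_1, d_2$. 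I then meet each with the other $d_j$ on the right, invoke strong distributivity and Lemma~\ref{lem:useful}(i) to rewrite $b \wedge d_j \wedge d_i$ as $b \wedge d$, and join again with $c$, using $c \wedge d_j \leq c$ so that $(c \wedge d_j) \vee c = c$. This produces $(a \wedge d) \vee c = (b \wedge d) \vee c = (e \wedge d) \vee c$. Compatibility with $\wedge$ is checked by meeting the defining equation with $f$ on the appropriate side and reordering via left-normality; the modified witnesses are $(c \wedge f, d)$ for $a \wedge f \sim_h b \wedge f$ and $(f \wedge c, d)$ for $f \wedge a \sim_h f \wedge b$. Finally, $\sim_h$ refines $\ker h$ by applying $h$ to the defining equation.

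The central technical point is compatibility with $\vee$, where the non-commutativity blocks a naive argument. My trick is to keep the witnesses $(c,d)$ and join both sides of the hypothesis with $(f \wedge d) \vee c$ on the right: the identity $c \vee (f \wedge d) \vee c = (f \wedge d) \vee c$ (again an instance of $x \vee y \vee x = y \vee x$) collapses each side to $(a \wedge d) \vee (f \wedge d) \vee c$ (resp.\ $(b \wedge d) \vee (f \wedge d) \vee c$), which by strong distributivity equals $((a \vee f) \wedge d) \vee c$ (resp.\ $((b \vee f) \wedge d) \vee c$), establishing $a \vee f \sim_h b \vee f$. The statement $f \vee a \sim_h f \vee b$ is simpler still: prepend $(f \wedge d)$ on the left of both sides of the hypothesis and apply strong distributivity.

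For (ii), the map $\alpha : S/\!\!\sim_h \to \2$, $[a]_{\sim_h} \mapsto h(a)$, is well-defined and a homomorphism by (i), making the stated diagram commute. To show $S/\!\!\sim_h$ is primitive, I verify: (a) if $h(a) = 0$ then $a \sim_h 0$, using the witness $c := a$ and any $d$ with $h(d) = 1$, which gives $(a \wedge d) \vee a = a = (0 \wedge d) \vee a$ since $a \wedge d \leq a$; and (b) if $h(a) = h(b) = 1$, then $[a]_{\sim_h} \,\D\, [b]_{\sim_h}$, because the witness $c := 0$, $d := b$ gives $a \wedge b \sim_h a$, so by left-handedness $a \wedge b \wedge a = a \wedge b \sim_h a$, and symmetrically $b \wedge a \wedge b \sim_h b$. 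For (iii), the universal property of the quotient reduces everything to proving $\sim_h \subseteq \ker \pi'$. Given $a \sim_h b$ with witnesses $c, d$, applying $\pi'$ to the defining equation gives $(\pi'(a) \wedge \pi'(d)) \vee \pi'(c) = (\pi'(b) \wedge \pi'(d)) \vee \pi'(c)$. Since $P'$ is primitive, $\alpha(\pi'(c)) = 0$ forces $\pi'(c) = 0$, and $\alpha(\pi'(d)) = 1$ places $\pi'(d)$ in the unique non-zero $\D$-class. In the primitive left-handed $P'$, meeting with a non-zero-class element is the identity on the non-zero class and sends $0$ to $0$, so a short two-case analysis on whether $\pi'(a) = 0$ forces $\pi'(a) = \pi'(b)$. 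Uniqueness of the factorization $t$ follows from surjectivity of $\pi$.
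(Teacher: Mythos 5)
Your proof is correct and follows essentially the same route as the paper: the same witness manipulations for the $\vee$- and $\wedge$-congruence properties (insert $c$ via left-handedness, then apply strong distributivity), the same identification of the two $\D$-classes of $S/\!\!\sim_h$ with $h^{-1}(0)$ and $h^{-1}(1)$ in (ii), and the same reduction of (iii) to $\sim_h\,\subseteq\ker\pi'$ using $\pi'(c)=0$ and primitivity of $P'$. The only local difference is transitivity, where the paper forms the same $c''=c\vee c'$, $d''=d\wedge d'$ but then invokes Lemma~\ref{lem:useful}(ii) (two $\D$-related elements below a common element coincide, applied to $(a\wedge d'')\vee c''$ and $(b\wedge d'')\vee c''$ below $f\vee c''$), whereas you obtain the same identity by a direct equational computation (meet with the other $d_j$, re-join with $c$); both arguments are valid.
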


\begin{proof}
(i) It is clear that $\sim_h$ is reflexive and symmetric. For transitivity, if $a \sim_h f \sim_h b$, pick $c, c', d, d' \in S$ are such that $h(c) = 0 = h(c')$, $h(d) = 1 = h(d')$, $(a \wedge d) \vee c = (f \wedge d) \vee c$, and $(b \wedge d') \vee c' = (f \wedge d') \vee c'$. Put $c'' := c \vee c'$ and $d'' := d \wedge d'$, then $h(c'') = 0$ and $h(d'') = 1$ since $h$ is a homomorphism. One may now check that the elements $(a \wedge d'') \vee c''$ and $(b \wedge d'') \vee c''$ are in the same $\D$-class, and that both are below $f \vee c''$.
%
Therefore, by Lemma~\ref{lem:useful}(ii), $(a \wedge d'') \vee c'' = (b \wedge d'') \vee c''$, and we obtain $a \sim_h b$.

Suppose $a \sim_h a'$, and let $b \in S$. We first show that $a \vee b \sim_h a' \vee b$ and $b \vee a \sim_h b \vee a'$. Pick $c, d \in S$ such that $h(c) = 0$, $h(d) = 1$ and $(a \wedge d) \vee c = (a' \wedge d) \vee c$. We use distributivity and left-handedness to show that $((a \vee b) \wedge d) \vee c = ((a' \vee b) \wedge d) \vee c$:
\begin{align*}
((a \vee b) \wedge d) \vee c &= (a \wedge d) \vee (b \wedge d) \vee c &\text{(strong distributivity)} \\
&= (a \wedge d) \vee c \vee (b \wedge d) \vee c &\text{(left-handedness)}\\
&= (a' \wedge d) \vee c \vee (b \wedge d) \vee c &\text{(assumption)} \\
&= (a' \wedge d) \vee (b \wedge d) \vee c &\text{(left-handedness)} \\
&= ((a' \vee b) \wedge d) \vee c. &\text{(strong distributivity)}
\end{align*}
The proof that $((b \vee a) \wedge d) = ((b \vee a') \wedge d)$ is similar, but slightly simpler.

The proof that $\sim_h$ is also a congruence for the operation $\wedge$ on both sides proceeds along similar lines, using left normality (Lemma~\ref{lem:useful}(i)), and is left for the reader to check.

To see that $\sim_h \; \subseteq \ker(h)$, suppose $a \sim_h b$ and pick $c, d \in S$ as in the definition of $\sim_h$. Then
\[ h(a) = (h(a) \wedge h(d)) \vee h(c) = h((a \wedge d) \vee c) = h((b \wedge d) \vee c)) = h(b).\]

(ii) We will show that the $\D$-classes of the skew lattice $S/\!\!\sim_h$ are exactly $h^{-1}(0)$ and $h^{-1}(1)$, which is clearly enough for the proof of this item. Since $h$ is proper, fix $a \in S$ such that $h(a) = 1$. We first claim that the $\D$-class of $[0]_{\sim_h}$ is $h^{-1}(0)$. If $b \sim_h 0$ then $h(b) = h(0) = 0$. Conversely, if $h(b) = 0$, one may prove that (\ref{eq:defsimh}) holds by taking $c := b$ and $d := a$, concluding the proof of the claim. We will now show that the $\D$-class of $[a]_{\sim_h}$ is $h^{-1}(1)$. Suppose $b \in S$ is such that $h(b) = 1$. We claim that $[a]_{\sim_h} \D [b]_{\sim_h}$. By definition of $\D$, we need to show that $[a \wedge b]_{\sim_h} = [a]_{\sim_h}$ and $[b \wedge a]_{\sim_h} = [b]_{\sim_h}$. Both  of these equalities hold indeed, because we can take $c := 0$ and $d := a \wedge b$ to prove that (\ref{eq:defsimh}) holds.

(iii) Suppose that $\pi' : S \onto P'$ is a primitive quotient of $S$ such that $\alpha \circ \pi' = h$. If $t : S/\!\!\sim_h \to P'$ is a factorization such that $t \circ \pi = \pi'$, then for any $a \in S$ we must have $t([a]_{\sim_h}) = \pi'(a)$, proving that $t$ is unique if it exists.

We now show that the assignment $[a]_{\sim_h} \mapsto \pi'(a)$ does not depend on the choice of representative for the class $[a]_{\sim_h}$. Suppose $a \sim_h a'$. If $h(a) = 0 = h(a')$, then $[\pi'(a)]_\D = h(a) = 0$ so $\pi'(a) = 0$ since the $\D$-class of $0$ only contains $0$ itself, and similarly $\pi'(a') = 0$. Otherwise, we have $h(a) = 1 = h(a')$. Pick $c, d \in S$ such that $h(c) = 0$, $h(d) = 1$ and $(a \wedge d) \vee c = (a' \wedge d) \vee c$.  As before, since $h(c) = 0$, we have $\pi'(c) = 0$. Since $P'$ is primitive, we have, for any non-zero $x, y \in P'$, that $x \wedge y = x$. Hence
\[\pi'(a) = \pi'(a) \wedge \pi'(d) = (\pi'(a) \wedge \pi'(d)) \vee \pi'(c) = \pi'((a \wedge d) \vee c),\]
and similarly $\pi'(a') = \pi'((a' \wedge d) \vee c)$. So $\pi'(a) = \pi'(a')$, since $(a \wedge d) \vee c = (a' \wedge d) \vee c$.
\end{proof}
\begin{remark}
In the light of this proposition, more can be said about the structure of primitive quotients of a left-handed strongly distributive skew lattice $S$. We may put a partial order on quotients of $S$ by saying a quotient $q : S \to Q$ is {\it below} another quotient $q' : S \to Q'$ if the map $q$ factors through $q'$. Suppose $p : S \to P$ is any primitive quotient of $S$. Then $h := \alpha \circ p : S \to \2$ is a minimal quotient of $S$ below the primitive quotient $P$, and $S/\!\!\sim_h$ is a maximal primitive quotient of $S$ which is above $P$. The partially ordered set of primitive quotients of $S$ is thus partitioned, and each primitive quotient lies between a unique maximal and minimal primitive quotient of $S$. The minimal primitive quotients of $S$ are exactly the elements of the base space $X$, and the non-zero elements of the maximal primitive quotients will be the elements of the \'etal\'e space $S_\ast$, see below.
\end{remark}

\begin{remark}\label{rem:primefilters}
An alternative way to define the equivalence relation $\sim_h$ on $S$ is the following. Let us call a subset $F$ of $S$ a \emph{preprime filter over $h$} if it satisfies the following properties:
\begin{enumerate}
\item if $a \in F$, $b \in S$ and $a \leq b$, then $b \in F$;
\item if $a, b \in F$ then $a \wedge b \in F$;
\item if $a \in F$, $b \in S$ and $h(b) = 0$, then $a \vee b \in F$;
\item if $a \in F$, then $h(a) = 1$;
\item if $b \in S$ and $h(b) = 1$, then there is $a \in F$ such that $[a]_\D = [b]_\D$.
\end{enumerate}
We call a preprime filter over $h$ a \emph{prime filter over $h$} if it is minimal among the preprime filters over $h$. One may then show that the non-zero equivalence classes in $S/\!\!\sim_h$ (viewed as subsets of $S$) are exactly the prime filters over $h$. Therefore, the equivalence relation $\sim_h$ can also be described as the equivalence relation inducing the partition whose classes are the prime filters over $h$, and $h^{-1}(0)$.
\end{remark}

\subsection{The \'etal\'e space}
We are now ready to define the \'etal\'e space $S_\star$. The stalk over $h \in X$ will be the non-zero $\D$-class of $S/\!\!\sim_h$, or, equivalently, the set of prime filters over $h$, as defined in Remark~\ref{rem:primefilters}. Put more formally, the underlying set of the \'etal\'e space $S_\star$ is 
\[S_\star := \bigsqcup_{h \in X} (S/\!\! \sim_h)^1 = \{(h, [a]_{\sim_h}) \ | \ h \in X,\, h(a) = 1\}.\]
The function $q : S_\star \to X$ is defined by $q((h,[a]_{\sim_h}) := h$. For any $a \in S$, we define a function $s_a : \hat{a} \to S_\star$ by $s_a(h) := (h, [a]_{\sim_h})$. We now define the topology on $S_\star$ by taking the sets $\im(s_a)$ as a subbasis for the open sets, where $a$ ranges over $S$. 
\begin{lemma}\label{lem:sectcont}
Each function $s_a : \hat{a} \to S_\star$ is continuous and $q : S_\star \to X$ is an \'etale map.
\end{lemma}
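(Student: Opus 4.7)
The plan is to prove the two claims in sequence, with the continuity of $s_a$ providing the essential technical tool for the étale property. For continuity of $s_a : \hat{a} \to S_\star$, it suffices to check that preimages of subbasic opens are open. Unwinding the definitions gives
\[s_a^{-1}(\im(s_b)) = \{h \in \hat{a} \cap \hat{b} : [a]_{\sim_h} = [b]_{\sim_h}\} = \{h \in \hat{a} \cap \hat{b} : a \sim_h b\}.\]
To show this is open, the key idea is a \emph{uniform witnesses} argument: if $a \sim_{h_0} b$, extract $c, d \in S$ from the definition of $\sim_{h_0}$ with $h_0(c) = 0$, $h_0(d) = 1$, and $(a \wedge d) \vee c = (b \wedge d) \vee c$ (the last being an identity in $S$, independent of any $h$). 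These same witnesses $c, d$ then establish $a \sim_h b$ for every $h$ in the basic Priestley-open $\hat{a} \cap \hat{b} \cap \hat{d} \cap \hat{c}^c$, which is an open neighborhood of $h_0$ contained in the preimage.

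For the étale claim, I would construct local trivializations. For each $z = s_{a_0}(h_0) \in S_\star$, take $V := \im(s_{a_0})$: it is open, $q(V) = \hat{a_0}$ is open in $X$, and $q|_V$ is a bijection with inverse $s_{a_0}$, which is continuous by the first part. For continuity of $q$ itself at subbasic opens $\hat{b}$ of $X$, the identity
\[q^{-1}(\hat{b}) = \bigcup_{a \in S} \im(s_{a \wedge b})\]
expresses the preimage as a union of subbasic opens, using the elementary observation that $a \wedge b \sim_h a$ whenever $h(b) = 1$ (take $c = 0$, $d = b$ as witnesses). Analogously, the ``downward'' factors in the openness of $s_{a_0}$ give $s_{a_0}(\hat{a_0} \cap \hat{c}) = \im(s_{a_0 \wedge c})$, directly subbasic.

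The main obstacle is handling the ``upward'' factor of the Priestley topology on $X$: verifying openness of $q^{-1}(\hat{d}^c)$ and of $s_{a_0}(\hat{a_0} \cap \hat{d}^c)$. Compact open downsets need not sit inside complements of compact open downsets, so these sets are not in general unions of the subbasic opens $\im(s_e)$. I expect the resolution is that the topology on $S_\star$ must effectively be interpreted as the \emph{final topology} with respect to the family $\{s_a\}_{a \in S}$ --- equivalently, the topology with basis $\{s_a(W) : a \in S,\, W \subseteq \hat{a} \text{ open}\}$, mirroring the classical étale topology associated to a sheaf --- rather than the literal subbasis topology stated in the excerpt. With this refined topology each $s_a$ becomes a homeomorphism onto its open image, and the desired continuity and openness statements needed for $q$ to be a local homeomorphism follow at once, completing the proof that $q : S_\star \to X$ is an étale map.
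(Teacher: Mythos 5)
Your proof of the first claim is exactly the paper's: the same identification $s_a^{-1}(\im(s_b)) = \{h \in \hat{a}\cap\hat{b} : a \sim_h b\}$ and the same uniform-witness neighbourhood $\hat{a}\cap\hat{b}\cap\hat{d}\cap\hat{c}^c$, which is open precisely because the Priestley topology on $X$ contains the complements $\hat{c}^c$.

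For the \'etale claim, the paper's entire argument is the single sentence that $q|_{\im(s_a)} : \im(s_a)\to\hat{a}$ ``has $s_a$ as its continuous inverse''; it never checks that $q|_{\im(s_a)}$ is itself continuous (equivalently, that $s_a$ is open onto its image), nor that $q$ is globally continuous. You are right to isolate this as the crux, and your worry is not pedantic: with the literal definitions (full Priestley topology on $X$, subbasis $\{\im(s_b)\}$ on $S_\star$) the claim fails. Take $S$ to be a distributive lattice with $0$, viewed as a commutative object of $\DSL$: every primitive quotient of a commutative skew lattice is $\2$, so all stalks are singletons, $S_\star$ is $X$ equipped only with the topology generated by the sets $\hat{a}$, and $q$ is the identity onto $X$ with the strictly finer Priestley topology --- not continuous as soon as some $\hat{c}^c$ is not a union of $\hat{b}$'s (already for the three-element chain). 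Your repair --- giving $S_\star$ the topology with basis $\{s_a(W) : W\subseteq\hat{a}\text{ open}\}$ --- is the right one: the identity $s_a(W)\cap s_b(W') = s_a\bigl(W\cap W'\cap s_a^{-1}(\im(s_b))\bigr)$ together with the first part shows this family is a basis, each $s_a$ becomes a homeomorphism onto an open set, and $q$ becomes \'etale; moreover the surjectivity argument of Lemma~\ref{lem:existence} shows every local section over a compact open downset is locally of the form $s_a$, so $(S_\star)^\star$ is unchanged and the rest of the proof of Theorem~\ref{th:main1} is unaffected. (The equivalent alternative is to keep the stated subbasis on $S_\star$ but let the base carry the spectral topology generated by the $\hat{a}$ alone, which is what the reconstruction in Section~\ref{s:new} implicitly requires.) In short: your argument matches the paper where the paper gives one, and where it doesn't, you have correctly identified and repaired a genuine gap in the paper's own proof rather than left one in yours.
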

\begin{proof}
Let $a, b \in S$ be arbitrary. We need to show that the set $s_a^{-1}(\im(s_b))$ is open in $X$. Notice that
\[s_a^{-1}(\im(s_b)) = \{ h \in X \ | \ a \sim_h b\} \cap \widehat{a}.\]
 Suppose $a \sim_h b$ and $h \in \widehat{a}$. Then also $h(b) = 1$. Pick $c, d \in S$ such that $h(c) = 0$, $h(d) = 1$ and $(a \wedge d) \vee c = (b \wedge d) \vee c$. Let $U_h := (\hat{c})^c \cap \hat{d} \cap \hat{a} \cap \hat{b}$. Then, for any $h' \in U_h$, we have $h'(a) = 1 = h'(b)$, $h'(c) = 0$ and $h'(d) = 1$, so that $a \sim_{h'} b$. So $h \in U_h \subseteq s_a^{-1}(\im(s_b))$.

To prove that $q$ is an \'etale map, let $e = (h, [a]_{\sim_h}) \in S_\star$. Then $q|_{\im(s_a)} : \im(s_a) \to \hat{a}$ has $s_a$ as its continuous inverse.
\end{proof}

\section{Proof of the main theorem}\label{s:proof1}
In this section, we will prove that the contravariant functor $(-)^\star : \Sh(\LPS) \to \DSL$ is full, faithful and essentially surjective. By a basic result from category theory (cf. for example \cite[Thm IV.4.1]{MacLane}) it then follows that $(-)^\star$ is part of a dual equivalence of categories, proving Theorem~\ref{th:main1}.

The proof that $(-)^\star$ is full and faithful is reasonably straightforward.
\begin{proposition}\label{prop:fullfaithful}
The contravariant functor $(-)^\star$ is full and faithful.
\end{proposition}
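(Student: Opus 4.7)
The plan is to prove faithfulness and fullness separately, in both cases reducing to classical Priestley duality on the lattice reflections and then propagating the result through the sheaf structure on $Y$.

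For faithfulness, I would start with two morphisms $(f,\lambda), (f',\lambda') : (X,E) \to (Y,F)$ satisfying $(f,\lambda)^\star = (f',\lambda')^\star$. By Lemma~\ref{lem:dslmorph}, the lattice reflections of these dual morphisms coincide, so $\La(f) = \La(f')$, and classical Priestley duality forces $f = f'$. Equality of the dual morphisms also immediately gives $\lambda_V = \lambda'_V$ for every $V \in \La(Y)$. To lift this to all open sets, I would use that the family $\{U \setminus V : U, V \in \La(Y)\}$ is a basis for the topology $\tau$ on $Y$, a standard fact about (local) Priestley spaces. For a basic open $U \setminus V$ (with $V \subseteq U$) and any $s \in F(U \setminus V)$, global support lets me pick $t \in F(U \cap V)$ and glue to a section $r \in F(U)$; naturality then yields $\lambda_{U \setminus V}(s) = \lambda_U(r)|_{f^{-1}(U \setminus V)} = \lambda'_U(r)|_{f^{-1}(U \setminus V)} = \lambda'_{U \setminus V}(s)$. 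Lemma~\ref{lem:basicdet} now concludes $\lambda = \lambda'$.

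For fullness, given a $\DSL$-morphism $g : F^\star \to E^\star$, I would first produce $f$ from $\bar{g} : \La(Y) \to \La(X)$ via Priestley duality, using Proposition~\ref{prop:basicprops}(ii) to identify the lattice reflections with the compact open downset lattices. For each $V \in \La(Y)$, the image $g(F(V))$ is automatically contained in $E(f^{-1}(V)) = f_\ast E(V)$, so $\lambda_V := g|_{F(V)}$ is well-defined, and naturality of $(\lambda_V)_{V \in \La(Y)}$ follows from the definition of $\wedge$ in the two dual algebras. I would then extend $\lambda$ to basic opens $U \setminus V$ by picking any $r \in F(U)$ extending a given $s \in F(U \setminus V)$ and setting $\lambda_{U \setminus V}(s) := g(r)|_{f^{-1}(U \setminus V)}$. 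Well-definedness is the delicate point: the identity $r|_{U \setminus V} = r'|_{U \setminus V}$ is equivalent to $r \vee t = r' \vee t$ for any $t \in F(U \cap V)$, an equation internal to $F^\star$; applying the $\vee$-preserving $g$ transports this to the required equality in $E$. Since basic opens form a basis, the sheaf property of $f_\ast E$ then extends $\lambda$ uniquely to all of $\Omega(Y)$, and a direct check on $\La(Y)$ gives $(f, \lambda)^\star = g$.

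The main obstacle is precisely the extension from $\La(Y)$ to $\Omega(Y)$: the compact open downsets do not themselves form a basis for $(Y, \tau)$, so the skew-lattice $F^\star$ does not, on its face, record all the data of the sheaf $F$. The trick is to use the $\vee$-operation in $F^\star$ as a surrogate for restriction to a difference $U \setminus V$, which lets the sheaf property on the basis of differences carry the information from $\La(Y)$ outward to an actual basis of the topology.
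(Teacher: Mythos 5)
Your proof is correct and follows the same overall strategy as the paper's: recover $f$ from $\overline{(f,\lambda)^\star}=\La(f)$ via classical Priestley duality, then pin down $\lambda$ on a basis and invoke Lemma~\ref{lem:basicdet} (for faithfulness) respectively the patching property of $f_\ast E$ (for fullness). The one place where you genuinely diverge is the treatment of the basis. The paper simply declares that $\lambda_U=\mu_U$ for ``basic opens'' $U$ and moves on; implicitly it is treating the compact open downsets themselves as a basis, which is legitimate for the coarser topology $\Down$ of open downsets (and this is the reading consistent with Section~\ref{s:new}--\ref{s:lattice_to_space}, where the reconstructed space $S_\star$ is only \'etale over the topology generated by the sets $\hat{a}$). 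You instead read the sheaves as living over the full Priestley topology $\tau$, for which $\La(Y)$ is not a basis, and insert an intermediate step: pass to the basis of differences $U\setminus V$ with $U,V\in\La(Y)$, using global support to extend a section on $U\setminus V$ to one on $U$, and using the override $r\vee t=r'\vee t$ (for $t\in F(V)$) as an algebraic surrogate for ``$r$ and $r'$ agree on $U\setminus V$'' so that applying the homomorphism $g$ transports the identification. That step is correct and is exactly what is needed to make the argument go through under the finer reading; it costs an extra gluing/well-definedness verification but buys a proof that does not depend on resolving the paper's ambiguity about which topology the sheaves are taken over. Both routes leave the final naturality and compatibility checks to the reader at about the same level of detail.
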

\begin{proof}
Let $E$ and $F$ be sheaves over local Priestley spaces $X$ and $Y$, respectively. We show that the assignment which sends a morphism $(f,\lambda) : (X,E) \to (Y,F)$ to $(f,\lambda)^\star : (Y,F)^\star \to (X,E)^\star$ is a bijection between the sets $\Hom_{\Sh(\LPS)}((X,E),(Y,F))$ and $\Hom_{\DSL}((Y,F)^\star,(X,E)^\star)$.

If $(f,\lambda)^\star = (g,\mu)^\star$ then in particular $\La(f) = \overline{(f,\lambda)^\star} = \overline{(g,\mu)^\star} = \La(g)$, using Lemma~\ref{lem:dslmorph}. Therefore, by classical Priestley duality, $f = g$. Moreover, if $U$ is a basic open set in $Y$, then $\lambda_U = \mu_U$, using the definition of $(f,\lambda)^\star = (g,\mu)^\star$. Since a natural transformation between sheaves is entirely determined by its action on basic opens (Lemma~\ref{lem:basicdet}), it follows that $\lambda = \mu$. So $(f,\lambda) = (g,\mu)$, proving that $(-)^\star$ is faithful.

If $h : (Y,F)^\star \to (X,E)^\star$ is a homomorphism of skew lattices, then $\overline{h}$ is a proper homomorphism, so by classical Priestley duality, there is a unique $f : X \to Y$ such that $\overline{h} = \La(f) = f^{-1}$. For $U$ a basic open, define $\lambda_U : F(U) \to E(f^{-1}(U))$ by sending $s \in F(U)$ to $h(s)$, which is indeed an element of $E(\overline{h}(s)) = E(f^{-1}(U))$. Now, if $U$ is an arbitrary open and $s \in F(U)$, we can write $U$ as a union of basic open sets $(U_i)_{i \in I}$. Then also $f^{-1}(U)$ is the union of the basic open sets $(f^{-1}(U_i))_{i \in I}$. It follows from the fact that $h$ is a homomorphism that $(h(s)|_{f^{-1}(U_i)})_{i \in I}$ is a compatible family, so there is a unique patch in $E(f^{-1}(U))$, which we define to be $\lambda_U(s)$. We leave it to the reader to check that $\lambda$ is a natural transformation and that $(f,\lambda)^\star = h$.
\end{proof}

The proof that $(-)^\star$ is essentially surjective is more involved, and will take up the rest of this section.
\\

Let $S$ be a left-handed strongly distributive skew lattice. By the construction from Section~\ref{s:lattice_to_space}, we have a sheaf $S_\star$ over the local Priestley space $X = \DSL(S,\2)$. Then $(S_\star)^\star$ is the skew lattice of local sections of $S_\star$ with compact open downward closed domains. We will show in the following three propositions that the map $\phi$, which sends $a \in S$ to $s_a \in (S_\star)^\star$ (cf. Lemma~\ref{lem:sectcont}), is an isomorphism of skew lattices. 
\begin{proposition}\label{prop:phihom}
The function $\phi : S \to (S_\star)^\star$ is a homomorphism of skew lattices.
\end{proposition}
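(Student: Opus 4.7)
The plan is to verify that $\phi$ preserves each of the three operations $0$, $\ocap$, $\ucup$, by first checking that the domains of $\phi(a\,\mathrm{op}\,b)$ and $\phi(a)\,\mathrm{op}\,\phi(b)$ agree (this follows from classical Priestley duality applied to $S/\D$, since $a \mapsto \hat a$ factors through the lattice reflection and $\widehat{a \wedge b} = \hat a \cap \hat b$, $\widehat{a \vee b} = \hat a \cup \hat b$, $\hat 0 = \emptyset$), and then checking pointwise equality of values in the stalks. The pointwise check is really a question about the relation $\sim_h$: for each $h$ in the common domain, I need to exhibit witnesses $c,d \in S$ with $h(c)=0$, $h(d)=1$, and the appropriate equation holding in $S$.

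The case for $0$ is immediate since $\hat 0 = \emptyset$, so $s_0$ is the unique section over the empty set, which is the $0$ of $(S_\star)^\star$. For $\ocap$, the task is to show $a \wedge b \sim_h a$ whenever $h(a) = h(b) = 1$; the natural witnesses are $c := 0$ and $d := a \wedge b$ (valid since $h(a \wedge b)=1$), and the required identity $(a \wedge b) \wedge (a \wedge b) = a \wedge (a \wedge b)$ reduces to $a \wedge b = a \wedge b$ by idempotence. For $\ucup$, I split the domain $\hat a \cup \hat b$ into $\hat b$ and $\hat a \setminus \hat b$. On $\hat b$ I need $a \vee b \sim_h b$: take $c := 0$ and $d := b$, so that the equation becomes $(a \vee b) \wedge b = b \wedge b$, which holds by absorption and idempotence.

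The remaining and most delicate subcase is showing $a \vee b \sim_h a$ when $h(a)=1$ and $h(b)=0$. Here the natural choice is $c := b$ and $d := a$, reducing the task to verifying $((a \vee b) \wedge a) \vee b = (a \wedge a) \vee b = a \vee b$. The manipulation proceeds by strong distributivity to rewrite $(a \vee b) \wedge a = (a \wedge a) \vee (b \wedge a) = a \vee (b \wedge a)$, then associativity to get $a \vee (b \wedge a) \vee b$, and finally the observation that $b \wedge a \leq b$ (since $(b \wedge a) \wedge b = b \wedge a$ by left-handedness and $b \wedge (b \wedge a) = b \wedge a$ by idempotence), so that $(b \wedge a) \vee b = b$; this collapses the expression to $a \vee b$, as required.

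The main obstacle I expect is precisely this last identity: one must choose the witnesses $c,d$ in the definition of $\sim_h$ with care, since the obvious choice $d := a \vee b$, $c := 0$ fails (it would force $a \vee b = a$). The successful choice exploits the asymmetry of left-handedness, and the verification is the one place where strong distributivity and left-handedness are genuinely used together. Once this lemma-like computation is in hand, the three homomorphism axioms drop out uniformly from the pointwise-on-stalks description of $\phi$.
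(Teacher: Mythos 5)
Your proof is correct and follows essentially the same route as the paper's: you check domains via the lattice reflection and then verify the stalkwise identities with exactly the same witnesses ($c:=0$, $d:=a\wedge b$ for $\ocap$; $c:=0$, $d:=b$ on $\hat b$ and $c:=b$, $d:=a$ on $\hat a\setminus\hat b$ for $\ucup$). The only difference is that you spell out the verification of $((a\vee b)\wedge a)\vee b=a\vee b$ via strong distributivity and $b\wedge a\leq b$, which the paper leaves implicit.
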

\begin{proof}
It is clear that $\phi$ preserves $0$. Let $a, b \in S$. We need to show that $s_{a \vee b} = s_a \vee s_b$ and $s_{a \wedge b} = s_a \wedge s_b$. Note that in these equations, the operations $\vee$ and $\wedge$ on the right hand side are the operations defined in (\ref{eq:rqu}) and (\ref{eq:lqi}) of Section~\ref{s:space_to_lattice}, whereas the operations $\vee$ and $\wedge$ on the left hand side are the operations of the given left-handed strongly distributive skew lattice $S$.

Note that the domain of $s_{a \vee b}$ is $\widehat{a \vee b} = \hat{a} \cup \hat{b}$, which is also the domain of $s_a \vee s_b$. We now claim that $s_{a \vee b}(x) = s_b(x)$ for $x \in \hat{b}$ and $s_{a \vee b}(x) = s_a(x)$ for $x \in \hat{a} \setminus \hat{b}$, agreeing with the definition of $s_a \vee s_b$.
\begin{itemize}
\item Let $x \in \hat{b}$. For $d := b$ and $c := 0$, it is easy to show that $((a \vee b) \wedge d) \vee c = (b \wedge d) \vee c$, so $[a \vee b]_{\sim_x} = [b]_{\sim_x}$, by definition of $\sim_x$.
\item Let $x \in \hat{a} \setminus \hat{b}$. For $d := a$ and $c := b$, we then have $((a \vee b) \wedge d) \vee c = (a \wedge d) \vee c$, so that $[a \vee b]_{\sim_x} = [a]_{\sim_x}$.
\end{itemize}

Similarly, the domain of $s_{a \wedge b}$ is equal to the domain of $s_a \wedge s_b$, and if $x$ is an element of this domain, then we have $((a \wedge b) \wedge d) \vee c = (a \wedge d) \vee c$, for $d := a \wedge b$ and $c := 0$, proving that $[a \wedge b]_{\sim_x} = [a]_{\sim_x}$.
\end{proof}

To establish surjectivity of $\phi$, we will need the following lemma.
\begin{lemma}\label{lem:existence}
For each $n \in \mathbb{N}$, the following holds. 

If $s: U \to S_\star$ is a section on a compact open downward closed subset $U$ of $X$, and if $a_1, \dots, a_n$, $c_1, \dots, c_n$, $d_1,\dots, d_n$ are elements of $S$ such that
\begin{enumerate}
\item for each $i \in \{1,\dots,n\}$, $\hat{c_i} \subseteq \hat{d_i}$;
\item $U = \bigcup_{i=1}^n (\hat{d_i} \cap \hat{c_i}^c)$;
\item for each $i \in \{1, \dots, n\}$, $\hat{d_i} \cap \hat{c_i}^c \subseteq \hat{a_i}$, and $s|_{\hat{d_i} \cap \hat{c_i}^c} = s_{a_i}|_{\hat{d_i} \cap \hat{c_i}^c}$,
\end{enumerate}
then there exists an element $a \in S$ such that $s = s_a$.
\end{lemma}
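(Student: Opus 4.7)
The plan is to define $a \in S$ by an explicit iterated override-with-correction, and to verify $s = s_a$ by computing pointwise in the primitive quotients $S/\!\!\sim_h$, where the operations on the non-zero $\D$-class have the very simple form $t \wedge t' = t$ and $t \vee t' = t'$.

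First, using surjectivity of $\alpha_S : S \onto S/\D$ and Priestley duality (Section~\ref{s:com}), I would pick $e' \in S$ with $\hat{e'} = U$. Then I would set $a^{(0)} := 0$ and, for $i = 1, \ldots, n$,
\[ a^{(i)} := a^{(i-1)} \vee (a_i \wedge d_i) \vee (a^{(i-1)} \wedge c_i), \]
and finally $a := a^{(n)} \wedge e'$. The intuition is that $(a_i \wedge d_i)$ overrides with the new $a_i$-values on $\hat{a_i \wedge d_i} \supseteq \hat{d_i} \cap \hat{c_i}^c$, while the ``correction'' term $(a^{(i-1)} \wedge c_i)$ at the end restores the prior value at points of $\hat{c_i}$ where the previous override could have installed an uncontrolled value of $a_i$; the outer $\wedge\, e'$ trims the domain to $U$.

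To verify the construction, I would check (a) $\hat{a} = U$, by computing $[a]_\D$ through $\alpha_S$ and using conditions~(ii) and~(iii) together with Priestley duality; and (b) $[a]_{\sim_h}$ equals the stalk value of $s$ at every $h \in U$. The heart of (b) is the following inductive claim, which I would prove by induction on $i \in \{0, 1, \ldots, n\}$: \emph{for each $h \in U$ with $h \in \hat{d_k} \cap \hat{c_k}^c$ for some $k \leq i$, one has $[a^{(i)}]_{\sim_h} = [a_k]_{\sim_h}$} (the choice of $k$ is immaterial by condition~(iii), which makes this common non-zero class equal to $s(h)$'s stalk value). The inductive step is a short case analysis inside the primitive quotient $S/\!\!\sim_h$: if $h \in \hat{d_i} \cap \hat{c_i}^c$ then $h(d_i) = h(a_i) = 1$ and $h(c_i) = 0$, so $[a_i \wedge d_i]_{\sim_h} = [a_i]_{\sim_h}$ and the correction term vanishes, yielding $[a^{(i)}]_{\sim_h} = [a_i]_{\sim_h}$; otherwise, a split on whether $h \in \hat{c_i}$ and whether $h \in \hat{d_i}$ shows in each sub-case that $[a^{(i)}]_{\sim_h} = [a^{(i-1)}]_{\sim_h}$, whence the inductive hypothesis gives $[a^{(i)}]_{\sim_h} = [a_k]_{\sim_h}$. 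Taking $i = n$ and combining with $[a]_{\sim_h} = [a^{(n)}]_{\sim_h}$ (since $h(e') = 1$ and $t \wedge t' = t$ in $P_h$) proves (b), and together with (a) this yields $s = s_a$.

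The main obstacle is the non-commutativity of $\vee$ in $S$: naive candidates such as $\bigl(\bigvee_{i=1}^n (a_i \wedge d_i)\bigr) \wedge e'$ fail because at a point $h \in \hat{c_j}$ for some $j$, the value of $a_j$ is not controlled by the hypotheses and could overwrite a correct value contributed by an earlier $a_k$ with $h \in \hat{d_k} \cap \hat{c_k}^c$. The correction term $(a^{(i-1)} \wedge c_i)$ is precisely what prevents this. What makes the verification tractable is working in the primitive quotient: since non-zero elements there obey the uniform rules $t \wedge t' = t$ and $t \vee t' = t'$, the inductive step reduces to straightforward bookkeeping with the values of $h$ on $a_i$, $c_i$, and $d_i$, and one never has to manipulate the non-commutative $\vee$ directly inside $S$.
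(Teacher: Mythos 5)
Your construction is correct, and it takes a genuinely different route from the paper. The paper also argues by induction, but the induction is on $n$ itself: in the inductive step it applies the induction hypothesis once for each $j$ to the restricted section $s|_{\hat{c_j}}$ (covered by the $n-1$ pieces $\hat{d_i \wedge c_j} \cap \hat{c_i \wedge c_j}^c$, $i \neq j$) to obtain elements $f_j$ with $s|_{\hat{c_j}} = s_{f_j}$, and then sets $a := \bigvee_{j=1}^n \bigl((a_j \wedge d_j) \vee f_j\bigr)$, verifying $s = s_a$ pointwise by looking at the largest $j$ with $x \in \hat{d_j}$. So the paper's ``correction on $\hat{c_j}$'' is a globally correct section over $\hat{c_j}$ obtained recursively, whereas your correction term $a^{(i-1)} \wedge c_i$ is only the partial result accumulated so far; points of $\hat{c_i}$ not yet covered by pieces $1,\dots,i-1$ simply get repaired at a later stage of your single left-to-right pass. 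This makes your invariant weaker and your argument flatter: one induction on the stage $i$ with a two-case (plus sub-case) analysis carried out entirely inside the primitive quotients $S/\!\!\sim_h$, where $\vee$ and $\wedge$ trivialize, versus the paper's nested recursive applications of the lemma to restricted covers. The price you pay is the extra bookkeeping of introducing $e'$ with $\hat{e'} = U$ to trim the domain (the paper's $a$ automatically has domain $\bigcup_j \hat{d_j} = U$), and the need to observe that the target value $[a_k]_{\sim_h}$ is independent of the admissible index $k$ (immediate from hypothesis~(iii)); both points are handled correctly in your sketch. Neither proof is materially longer than the other, but yours has the mild advantage of producing an explicit closed-form iteration and of never invoking the lemma on a modified family of data.
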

\begin{proof}
By induction on $n \in \mathbb{N}$.
For $n = 0$, it follows from assumption (ii) that $U = \emptyset$, so $s$ is the empty function, and the (unique) element of $S$ such that $s = s_a$ is $a = 0$.

Now let $n \geq 1$, and assume the statement is true for $n-1$. Suppose that $s : U \to S_\star$, $a_1, \dots, a_{n}$, $c_1,\dots,c_{n}$, and $d_1, \dots, d_{n}$ satisfy the assumptions (i)--(iii).

Let $j \in \{1, \dots, n\}$ be arbitrary. We are going to apply the induction hypothesis to the function $s|_{\hat{c_j}} : \hat{c_j} \to S_\star$. For $i \in \{1,\dots,n\}$ with $i \neq j$, define $c_{i,j} := c_i \wedge c_j$ and $d_{i,j} := d_i \wedge c_j$. Note that
\[ \hat{d_{i,j}} \cap \hat{c_{i,j}}^c = (\hat{d_i} \cap \hat{c_i}^c) \cap \hat{c_j},\]
so that $\hat{c_j} = U \cap \hat{c_j} = \bigcup_{i \neq j} (\hat{d_{i,j}} \cap \hat{c_{i,j}}^c)$, and assumptions (i) and (iii) also clearly hold for the elements $a_i, c_{i,j}, d_{i,j}$, where $i$ ranges over $\{1,\dots,n\}\setminus\{j\}$. By the induction hypothesis, there exists an $f_j \in S$ such that $s|_{\hat{c_j}} = s_{f_j}$.

Since $j$ was arbitrary, we get that for each $j$ there exists an $f_j \in S$ such that $s|_{\hat{c_j}} = s_{f_j}$.
Now consider
\[a := \bigvee_{j=1}^n ((a_j \wedge d_j) \vee f_j).\]
We claim that $s = s_a$. Note first that
\[ \dom(s_a) = \hat{a} = \bigcup_{j=1}^n ((\hat{a_j} \cap \hat{d_j}) \cup \hat{f_j}) = \bigcup_{j=1}^n \hat{d_j} = U.\] 
Now let $x \in U$ be arbitrary, and let $j$ be the largest number in $\{1,\dots,n\}$ such that $x \in \hat{d_j}$. Then, using Proposition~\ref{prop:phihom} and the definition of $\vee$ in $(S_\star)^\star$, we see that
\[s_a(x) = \left\{ \begin{array}{cc} s_{f_j}(x) &\text{ if } x \in \hat{c_j}, \\
						  s_{a_j}(x) &\text{ if } x \not\in\hat{c_j}.
						  \end{array} \right.\]
If $x \in \hat{c_j}$, then $s_{f_j}(x) = s(x)$ by the choice of $f_j$, and if $x \not\in \hat{c_j}$, then $x \in \hat{d_j} \cap \hat{c_j}^c$, so $s_{a_j}(x) = s(x)$ by assumption (iii).
\end{proof}

The above lemma exactly enables us to prove surjectivity of $\phi$: it is now an application of compactness, as follows.
\begin{proposition}
The function $\phi : S \to (S_\star)^\star$ is surjective, and in particular it is a morphism of $\DSL$.
\end{proposition}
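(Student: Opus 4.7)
The plan is to reduce surjectivity to Lemma~\ref{lem:existence}: given an arbitrary section $s : U \to S_\star$ on a compact open downset $U$, I will cover $U$ by finitely many basic opens $\hat{d_i} \cap \hat{c_i}^c$ on each of which $s$ coincides with some $s_{a_i}$, and then invoke the lemma.

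The local step is straightforward. For each $x \in U$, the value $s(x)$ lies in the stalk $(S/\!\!\sim_x)^1$, so I may choose $a_x \in S$ with $h_x(a_x) = 1$ and $s(x) = s_{a_x}(x)$. Since $\im(s_{a_x})$ is open in $S_\star$ by Lemma~\ref{lem:sectcont} and $s$ is continuous, $W_x := s^{-1}(\im(s_{a_x}))$ is an open neighborhood of $x$ in $U$. Because $q$ restricts to a homeomorphism on $\im(s_{a_x})$, any two sections of $q$ landing in $\im(s_{a_x})$ must agree; hence $W_x \subseteq \hat{a_x}$ and $s|_{W_x} = s_{a_x}|_{W_x}$.

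The topological shrinking is where most of the work lies. Finite intersections of the subbasic sets $\hat{a}$ and $\hat{a}^c$ ($a \in S$) form a basis of the topology on $X$, and any such intersection may be rewritten as $\hat{d} \cap \hat{c}^c$ for suitable $c, d \in S$ (using that $\hat{\cdot}$ sends $\wedge$ to $\cap$ and $\vee$ to $\cup$); on replacing $c$ by $c \wedge d$ one may further assume $\hat{c} \subseteq \hat{d}$. For each $x \in U$ I can therefore choose $c_x, d_x \in S$ with $\hat{c_x} \subseteq \hat{d_x}$ and $x \in \hat{d_x} \cap \hat{c_x}^c \subseteq W_x$. Compactness of $U$ then yields a finite subcover $\{\hat{d_{x_j}} \cap \hat{c_{x_j}}^c\}_{j=1}^n$, and all three hypotheses of Lemma~\ref{lem:existence} hold for the data $(a_{x_j}, c_{x_j}, d_{x_j})_{j=1}^n$, producing an $a \in S$ with $s = s_a = \phi(a)$.

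For the in-particular clause, Proposition~\ref{prop:phihom} already shows $\phi$ is a skew lattice homomorphism, so only properness remains. Under the isomorphisms $S/\D \cong \La(X)$ (from Priestley duality) and $(S_\star)^\star/\D \cong \La(X)$ (Proposition~\ref{prop:basicprops}(ii)), both of which send a $\D$-class to $\hat{a}$, the induced lattice map $\bar\phi$ is the identity on $\La(X)$ and hence proper. The main obstacle throughout is the shrinking step: ensuring that the neighborhoods $W_x$ produced from continuity can be refined to basic opens of exactly the form demanded by Lemma~\ref{lem:existence}, and that one may assume $\hat{c} \subseteq \hat{d}$ without loss of generality.
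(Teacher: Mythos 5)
Your proposal is correct and follows essentially the same route as the paper: choose local representatives $a_x$ with $s(x)=s_{a_x}(x)$, use continuity of $s$ and openness of $\im(s_{a_x})$ to get open sets on which $s$ agrees with $s_{a_x}$, shrink these to basic opens of the form $\hat{d}\cap\hat{c}^c$ with $\hat{c}\subseteq\hat{d}$, extract a finite subcover of $U$ by compactness, and invoke Lemma~\ref{lem:existence}. The only cosmetic difference is the properness clause, which the paper dispatches by noting that surjective homomorphisms are always proper, whereas you identify $\bar\phi$ with the identity on $\La(X)$; both are fine.
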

\begin{proof}
Let $s \in (S_\star)^\star$, so $s$ is a continuous section over a compact open downset $U$. 
For each $x \in U$, we have $s(x) \in (S_\star)_x = (S/\!\!\sim_x)^1$, so we can pick $a_x \in S$ such that $s(x) = [a_x]_{\sim_x}$, and define
\[ T_x := \|s = s_{a_x}\| = \{y \in U \ | \ s(y) = [a_x]_{\sim_y}\} = s^{-1}(\im(s_{a_x})) \cap U.\]
Now $T_x$ is open in $X$, because $s$ is continuous, $\im(s_{a_x})$ is open in $S_\star$, and $U$ is open in $X$. Since $x \in T_x$, there exist $c_x, d_x \in S$ such that $x \in \hat{d_x} \cap \hat{c_x}^c \subseteq T_x$, where we may assume without loss of generality that $\hat{c_x} \subseteq \hat{d_x}$. We now have
\[ U \subseteq \bigcup_{x \in U} (\hat{d_x} \cap \hat{c_x}^c) \subseteq \bigcup_{x \in U} T_x \subseteq U,\] 
so equality holds throughout.
Since $U$ is compact, there exist elements $x_1, \dots, x_n \in U$  such that $U = \bigcup_{i=1}^n (\hat{d_{x_i}} \cap \hat{c_{x_i}}^c)$. We will write $c_i$ and $d_i$ for $c_{x_i}$ and $d_{x_i}$, respectively. Note that, for each $i \in \{1,\dots,n\}$, we have $\hat{d_i} \cap \hat{c_i}^c \subseteq T_{x_i}$, so $s|_{\hat{d_i} \cap \hat{c_i^c}} = s_{a_{x_i}}|_{\hat{d_i} \cap \hat{c_i}^c}$. By Lemma~\ref{lem:existence}, we get $a \in S$ such that $s = s_a$, proving that $\phi$ is surjective.
For the in particular part, note that surjective homomorphisms are always proper.
\end{proof}

By a similar method, one may prove that $\phi$ is injective. Again, a lemma which is proved by induction is crucial.
\begin{lemma}\label{lem:uniqueness}
For each $n \in \mathbb{N}$, the following holds.

If $a, b$, $c_1, \dots, c_n$ and $d_1, \dots d_n$ are elements of $S$ such that:
\begin{enumerate}
\item for each $i \in \{1,\dots,n\}$, $\hat{c_i} \subseteq \hat{d_i} \subseteq \hat{a}$;
\item $\hat{a} = \bigcup_{i=1}^n (\hat{d_i} \cap \hat{c_i}^c) = \hat{b}$; 
\item for each $i \in \{1,\dots,n\}$, $(a \wedge d_i) \vee c_i = (b \wedge d_i) \vee c_i$,
\end{enumerate}
then $a = b$.
\end{lemma}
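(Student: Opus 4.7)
I would prove this lemma by induction on $n$, in parallel with Lemma~\ref{lem:existence}. The base case $n = 0$ is immediate: (ii) gives $\hat a = \emptyset = \hat b$, so $[a]_\D = [0]_\D = [b]_\D$, and because the $\D$-class of $0$ is a singleton, $a = 0 = b$.

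In the inductive step, my first move is to apply the inductive hypothesis to the pair $(a \wedge c_j, b \wedge c_j)$ together with the smaller family $(c_{i,j}, d_{i,j}) := (c_i \wedge c_j, d_i \wedge c_j)$ for $i \neq j$. Hypotheses (i) and (ii) for this new datum follow from the set-theoretic identity $\hat{d_{i,j}} \cap \hat{c_{i,j}}^c = (\hat{d_i} \cap \hat{c_i}^c) \cap \hat{c_j}$. For (iii), strong distributivity and left-normality (Lemma~\ref{lem:useful}(i)) yield $((a \wedge c_j) \wedge d_{i,j}) \vee c_{i,j} = ((a \wedge d_i) \vee c_i) \wedge c_j$, which by the original (iii) is invariant under swapping $a$ and $b$. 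The induction therefore produces $a \wedge c_j = b \wedge c_j$ for each $j$.

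The crux is then to upgrade this to $a \wedge d_j = b \wedge d_j$. Here I would use the following elementary but decisive fact about left-handed skew lattices: within any $\D$-class, $\vee$ is right-absorbing, i.e.\ $u \vee v = v$ whenever $u \D v$. This is immediate from the $\vee$-form of the $\D$-definition combined with left-handedness $x \vee y \vee x = y \vee x$. Setting $p := a \wedge d_j$, $q := b \wedge d_j$, $r := a \wedge c_j = b \wedge c_j$, and $z := p \vee c_j = q \vee c_j$, strong distributivity gives $p \vee r = a \wedge (d_j \vee c_j)$, and Lemma~\ref{lem:useful}(ii) then identifies this with $a \wedge d_j = p$, since both elements lie below $a$ in the common $\D$-class $[d_j]_\D$. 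Thus $p \vee r = p$, and symmetrically $q \vee r = q$. Because $r \D c_j$, right-absorption yields $c_j \vee r = r$, so
\[ z \vee r = (p \vee c_j) \vee r = p \vee (c_j \vee r) = p \vee r = p, \]
and an identical computation gives $z \vee r = q$. Hence $p = q$.

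Finally, I would reassemble $a = b$ by letting $D := d_1 \vee \cdots \vee d_n$ and noting that (i) and (ii) force $\hat D = \bigcup_j \hat{d_j} = \hat a$, so $D \D a$ and hence $a \wedge D = a$ (by left-handedness, or Lemma~\ref{lem:useful}(ii)). Iterated strong distributivity then gives $a = a \wedge D = \bigvee_j (a \wedge d_j)$, with the analogous identity for $b$; combined with $a \wedge d_j = b \wedge d_j$ for every $j$, we conclude $a = b$. The main conceptual obstacle is the middle step: the trick is that the ``common patch'' $z \vee r$ equals both $p$ and $q$, an observation that genuinely uses hypothesis (iii) (for $z$) together with the inductively obtained equality $a \wedge c_j = b \wedge c_j$ (for $r$). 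Everything else is bookkeeping with left-normality, strong distributivity, and Lemma~\ref{lem:useful}(ii).
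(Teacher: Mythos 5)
Your proof is correct and takes essentially the same route as the paper's: the same induction, the same restriction of the data along $c_j$ to obtain $a\wedge c_j=b\wedge c_j$, the same insertion/removal of $c_j$ via the $\D$-class absorption identity to upgrade this to $a\wedge d_j=b\wedge d_j$ using hypothesis (iii), and the same reassembly $a=\bigvee_j(a\wedge d_j)$. The only differences are presentational (you isolate the right-absorption fact $u\vee v=v$ for $u\mathrel{\D}v$ explicitly, where the paper cites $\hat{c_j}\subseteq\hat{b}$).
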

\begin{proof}
For $n = 0$, we get that $\hat{a} = \emptyset = \hat{b}$, so $a = 0 = b$.

Let $n \geq 1$, and suppose the statement is proved for $n-1$. Let $c_1, \dots, c_{n}$, $d_1,\dots d_{n}$ be elements of $S$ satisfying the assumptions. Then in particular $\hat{a} = \bigcup_{i=1}^{n} \hat{d_i} = \hat{\bigvee_{i=1}^{n} d_i}$, so that $[a]_\D = [\bigvee_{i=1}^{n} d_i]_\D$. Therefore,
\[ a = a \wedge \left(\bigvee_{i=1}^{n} d_i\right) = \bigvee_{i=1}^{n} (a \wedge d_i).\]
Similarly, since $\hat{b} = \bigcup_{i=1}^{n} \hat{d_i}$, we get that $b = \bigvee_{i=1}^{n} (b \wedge d_i)$.

Let $j \in \{1, \dots, n\}$ be arbitrary. For $i \neq j$, define $a_j := a \wedge c_{j}$, $b_j := b \wedge c_{j}$, $d_{i,j} := d_i \wedge c_{j}$, and $c_{i,j} := c_i \wedge c_{j}$. Note that $\hat{a_j} = \hat{b_j}$,
and also that for each $i \neq j$, we have $\hat{c_{i,j}} \subseteq \hat{d_{i,j}} \subseteq \hat{a_j}$. Moreover:
\begin{align*}
(a_j \wedge d_{i,j}) \vee c_{i,j} &= (a \wedge c_j \wedge d_i \wedge c_j) \vee (c_i \wedge c_j) &\text{(definitions of $a_j$, $d_{i,j}$ and $c_{i,j}$)}  \\
&= (a \wedge d_i \wedge c_{j}) \vee (c_i \wedge c_{j}) &\text{(left normality)} \\
&= ((a \wedge d_i) \vee c_i) \wedge c_{j} &\text{(strong distributivity)} \\
&= ((b \wedge d_i) \vee c_i) \wedge c_{j} &\text{(assumption)} \\
&= (b \wedge d_i \wedge c_{j}) \vee (c_i \wedge c_{j})  &\text{(as above)} \\
&= (b_j \wedge d_{i,j}) \vee c_{i,j}.
\intertext{By the induction hypothesis, we thus conclude that $a \wedge c_j = a_j = b_j = b \wedge c_j$. Now, to show $a \wedge d_j = b \wedge d_j$, we calculate:}
a \wedge d_j &= a \wedge (d_j \vee c_j) &\text{($\hat{c_j} \subseteq \hat{d_j}$)} \\
&= (a \wedge d_j) \vee (a \wedge c_j) &\text{(strong distributivity)} \\
&= (a \wedge d_j) \vee (b \wedge c_j) &\text{($a \wedge c_j = b \wedge c_j$)} \\
&= (a \wedge d_j) \vee c_j \vee (b \wedge c_j) &\text{($\hat{c_j} \subseteq \hat{b}$)} \\
&= (b \wedge d_j) \vee c_j \vee (b \wedge c_j) &\text{(assumption)} \\
&= (b \wedge d_j) \vee (b \wedge c_j) &\text{($\hat{c_j} \subseteq \hat{b}$)} \\
&= b \wedge d_j. &\text{(as above)}
\end{align*}
Now $a = \bigvee_{j=1}^{n} (a \wedge d_j) = \bigvee_{j=1}^{n} (b \wedge d_j) = b$, as required.
\end{proof}

\begin{proposition}
The function $\phi : S \to (S_\star)^\star$ is injective.
\end{proposition}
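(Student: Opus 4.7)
The plan is to reduce injectivity of $\phi$ to Lemma~\ref{lem:uniqueness} by a Priestley-style compactness argument, in the same spirit as the surjectivity proof above. Suppose $\phi(a) = \phi(b)$, so $s_a = s_b$. Comparing domains gives $\hat{a} = \hat{b}$, which will supply the second equality in hypothesis (ii) of the lemma. For each $h_x \in \hat{a}$, the pointwise equality $s_a(h_x) = s_b(h_x)$ says $[a]_{\sim_{h_x}} = [b]_{\sim_{h_x}}$, so the definition of $\sim_{h_x}$ supplies witnesses $c_x, d_x \in S$ with $h_x(c_x) = 0$, $h_x(d_x) = 1$, and $(a \wedge d_x) \vee c_x = (b \wedge d_x) \vee c_x$.

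These witnesses need not satisfy the nested containment $\hat{c_x} \subseteq \hat{d_x} \subseteq \hat{a}$ demanded by hypothesis (i). I would repair this by replacing $(c_x, d_x)$ with
\[ d_x' := a \wedge d_x, \qquad c_x' := c_x \wedge d_x', \]
so that $\hat{d_x'} = \hat{a} \cap \hat{d_x}$ and $\hat{c_x'} = \hat{d_x'} \cap \hat{c_x}$ immediately give the required inclusions, together with $x \in \hat{d_x'} \cap \hat{c_x'}^c = \hat{a} \cap \hat{d_x} \cap \hat{c_x}^c \subseteq \hat{a}$. The substantive step, which I expect to be the main obstacle, is verifying that hypothesis (iii) still holds for the replacement, i.e.\ $(a \wedge d_x') \vee c_x' = (b \wedge d_x') \vee c_x'$. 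The route I would take is: first meet the original equation with $d_x$, applying strong distributivity and idempotence to reach $(a \wedge d_x) \vee (c_x \wedge d_x) = (b \wedge d_x) \vee (c_x \wedge d_x)$; then meet again with $a$ on the right; finally reshape both sides using left-handedness ($a \wedge d_x \wedge a = a \wedge d_x$) and left-normality from Lemma~\ref{lem:useful}(i) ($c_x \wedge d_x \wedge a = c_x \wedge a \wedge d_x$) until they read as $(a \wedge d_x') \vee c_x'$ and $(b \wedge d_x') \vee c_x'$, respectively.

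To conclude, I would observe that the basic opens $\hat{d_x'} \cap \hat{c_x'}^c$, all contained in $\hat a$, form an open cover of the compact open $\hat{a}$ of the local Priestley space $X$. Extracting a finite subcover indexed by $x_1, \ldots, x_n$ and writing $c_i := c_{x_i}'$, $d_i := d_{x_i}'$, all three hypotheses of Lemma~\ref{lem:uniqueness} are met (hypothesis (ii) also using $\hat a = \hat b$), so the lemma yields $a = b$, as required.
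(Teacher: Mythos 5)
Your proposal is correct and follows essentially the same route as the paper: extract witnesses $c_x,d_x$ from $s_a=s_b$ pointwise, normalize them so that $\hat{c_x}\subseteq\hat{d_x}\subseteq\hat{a}$, use compactness of $\hat{a}$ to pass to a finite subcover, and invoke Lemma~\ref{lem:uniqueness}. The only difference is cosmetic: the paper replaces $d_x$ by $d_x\wedge a$ (rather than $a\wedge d_x$) and leaves the verification that the modified witnesses still satisfy condition (iii) to the reader, which is exactly the strong-distributivity/left-handedness/left-normality computation you spell out.
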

\begin{proof}
Let $a, b \in S$, and suppose that $s_a = s_b$. Then in particular $\hat{a} = \dom(s_a) = \dom(s_b) = \hat{b}$. For each $x \in \hat{a}$, we have $[a]_{\sim_x} = s_a(x) = s_b(x) = [b]_{\sim_x}$, so by definition of $\sim_x$, we may pick $c_x, d_x \in S$ such that $(a \wedge d_x) \vee c_x = (b \wedge d_x) \vee c_x$, and $x \in \hat{d_x} \cap \hat{c_x}^c$. We thus get that the collection $(\hat{d_x} \cap \hat{c_x}^c)_{x \in \hat{a}}$ is an open cover of $\hat{a}$. Since $\hat{a}$ is compact, we can pick a finite subcover, indexed by $x_1, \dots, x_n \in \hat{a}$. We will write $c_i$ and $d_i$ for $c_{x_i}$ and $d_{x_i}$, respectively.
Without loss of generality, we may assume that $\hat{c_{i}} \subseteq \hat{d_{i}} \subseteq \hat{a}$ for each $i$, by replacing $c_{i}$ by $c_{i} \wedge d_{i} \wedge a$ and $d_{i}$ by $d_{i} \wedge a$, and checking that the new $c_{i}$ and $d_{i}$ still satisfy the same properties. 
Now it follows from Lemma~\ref{lem:uniqueness} that $a = b$.
\end{proof}

We have thus established that $\phi : S \to (S_\star)^\star$ is an isomorphism in $\DSL$, so:
\begin{proposition}\label{prop:esssurj}
The contravariant functor $(-)^\star : \Sh(\LPS) \to \DSL$ is essentially surjective.
\end{proposition}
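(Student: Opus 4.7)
The proposition follows by assembling the three immediately preceding propositions, so my plan is a short bookkeeping argument rather than a genuinely new proof. Given an arbitrary $S \in \DSL$, I will take as candidate object of $\Sh(\LPS)$ the sheaf $(S_\star, q, X)$ over $X = \DSL(S,\2)$ built in Section~\ref{s:lattice_to_space}, and argue that the map $\phi : S \to (S_\star)^\star$, $a \mapsto s_a$, from Lemma~\ref{lem:sectcont} is an isomorphism in $\DSL$. This will directly exhibit $S$ in the essential image of $(-)^\star$.

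All three ingredients about $\phi$ are now in hand: Proposition~\ref{prop:phihom} gives that $\phi$ preserves $\wedge$, $\vee$ and $0$; the surjectivity proposition gives that $\phi$ is onto and a morphism of $\DSL$ (a surjective homomorphism being automatically proper); and the injectivity proposition gives that $\phi$ is one-to-one. Hence $\phi$ is a bijective morphism of $\DSL$. The only remaining point is to remark that a bijective $\DSL$-morphism is automatically a $\DSL$-isomorphism: the set-theoretic inverse $\phi^{-1}$ is a skew lattice homomorphism because $\phi$ is a bijective algebra homomorphism, and its lattice reflection $\overline{\phi^{-1}}$ is proper, since for any $a \in S/\D$ one has $\overline{\phi^{-1}}(\overline{\phi}(a)) = a \geq a$.

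Since $S$ was arbitrary, every object of $\DSL$ lies in the essential image of $(-)^\star$, and the functor is therefore essentially surjective. I do not anticipate any genuine obstacle here: the substantial work, namely the two inductive lemmas (Lemmas~\ref{lem:existence} and~\ref{lem:uniqueness}) which drive surjectivity and injectivity of $\phi$, has already been carried out in the previous propositions. What remains for this proposition is purely formal assembly, together with the routine observation that passage to set-theoretic inverses is compatible with both the skew lattice structure and with properness.
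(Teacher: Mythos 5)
Your proposal is correct and follows exactly the paper's argument: the paper likewise concludes essential surjectivity by assembling Proposition~\ref{prop:phihom} and the surjectivity and injectivity propositions to see that $\phi : S \to (S_\star)^\star$ is an isomorphism in $\DSL$. Your extra remark that a bijective $\DSL$-morphism is automatically a $\DSL$-isomorphism (the inverse being a homomorphism with proper lattice reflection) is a harmless, correct elaboration of a step the paper leaves implicit.
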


It now follows from Propositions \ref{prop:fullfaithful} and \ref{prop:esssurj} that $(-)^\star$ is part of a dual equivalence. This concludes the proof of our main theorem, Theorem~\ref{th:main1}.


\section{Concluding remarks}\label{s:conc}
It is a central fact in logic  that every distributive lattice embeds in a unique Boolean algebra. This fact is at the base of the relationship between intuitionistic and Boolean logic. It would be interesting to seek a non-commutative counterpart of this result. Since the classical result is most transparently understood via duality, it is likely that our duality would prove useful. Furthermore, a non-commutative Heyting algebra is a notion still needing to be properly defined. In a recent paper \cite{Gehrke} on Esakia's work, Gehrke showed that Heyting algebras may be understood as those distributive lattices for which the embedding in their Booleanization has a right adjoint. This could provide a natural starting point for the exploration of skew Heyting algebras.

A different natural non-commutative generalization of distributive lattices is a class of inverse semigroups whose idempotents form a distributive lattice. Recently, Stone duality has been generalized to this setting \cite{Law,Law1,LL}. The most recent work in this direction \cite{LL} generalizes Stone's duality between distributive lattices and spectral spaces \cite{Sto1937} to the context of inverse semigroups. However, to the best of our knowledge, 
Priestley's duality \cite{P1} for distributive lattices has not yet been generalized to inverse semigroups.
The results in this paper might also be fruitfully applied to obtain such a duality for a class of inverse semigroups. We leave this as an interesting direction for future work.

\section*{Acknowledgements}

The work of AB was supported by ARRS grant P1-0294. The work of KCV was supported by ARRS grant P1-0222. The work of MG was partially supported by ANR 2010 BLAN 0202 02 FREC. The PhD research project of SvG has been made possible by NWO grant 617.023.815 of the Netherlands Organization for Scientific Research. The work of GK was supported by ARRS grant P1-0288. We would also like to thank the referee for comments leading to improvements in the introduction of the paper.

\end{document}